\renewenvironment{proof}[1][\proofname] {\par\pushQED{\qed}\normalfont\topsep6\p@\@plus6\p@\relax\trivlist\item[\hskip\labelsep\bfseries#1\@addpunct{.}]\ignorespaces}{\popQED\endtrivlist\@endpefalse}
\newtheorem{theorem}{\bf Theorem}[section]
\newtheorem{lemma}[theorem]{\bf Lemma}
\theoremstyle{definition}
\newtheorem{remark}[theorem]{\bf Remark}
\newtheorem{definition}[theorem]{\bf Definition}
\def\eps{\varepsilon}
\def\vb{\textbf{b}}
\def\vv{\textbf{v}}
\def\vA{\textbf{A}}
\def\cG{\mathcal{G}}
\def\cH{\mathcal{H}}
\def\ex{\mathrm{ex}}
\title{}
\date{}
\author{
Domagoj Brada\v{c}\thanks{Department of Mathematics, ETH, Z\"urich, Switzerland. Research supported in part by SNSF grant 200021\_196965. Email: \textbf{\{domagoj.bradac, lior.gishboliner, benjamin.sudakov\}@math.ethz.ch}.}
\and 
Lior Gishboliner\footnotemark[1]
\and
Oliver Janzer\thanks{Department of Mathematics, ETH, Z\"urich, Switzerland. Research supported by an ETH Z\"urich Postdoctoral Fellowship 20-1 FEL-35. Email: \textbf{oliver.janzer@math.ethz.ch}.}
\and Benny Sudakov\footnotemark[1]
}
\title{Asymptotics of the hypergraph bipartite Tur\'an problem}
\begin{document}

\maketitle

\begin{abstract}
    For positive integers $s,t,r$, let $K_{s,t}^{(r)}$ denote the $r$-uniform hypergraph whose vertex set is the union of pairwise disjoint sets $X,Y_1,\dots,Y_t$, where $|X| = s$ and $|Y_1| = \dots = |Y_t| = r-1$, and whose edge set is $\{\{x\} \cup Y_i: x \in X, 1\leq i\leq t\}$. The study of the Tur\'an function of $K_{s,t}^{(r)}$ received considerable interest in recent years. Our main results are as follows. First, we show \nolinebreak that
    \begin{equation}\label{eq:upper bound}
        \ex(n,K_{s,t}^{(r)}) = O_{s,r}(t^{\frac{1}{s-1}}n^{r - \frac{1}{s-1}})
    \end{equation}
    for all $s,t\geq 2$ and $r\geq 3$, improving the power of $n$ in the previously best bound and resolving a question of Mubayi and Verstra\"ete about the dependence of $\ex(n,K_{2,t}^{(3)})$ on $t$. Second, we show that \eqref{eq:upper bound} is tight when $r$ is even and $t \gg s$. This disproves a conjecture of Xu, Zhang and Ge. Third, we show that \eqref{eq:upper bound} is {\em not} tight for $r = 3$, namely that $\ex(n,K_{s,t}^{(3)}) = O_{s,t}(n^{3 - \frac{1}{s-1} - \varepsilon_s})$ (for all $s\geq 3$). This indicates that the behaviour of $\ex(n,K_{s,t}^{(r)})$ might depend on the parity of $r$. Lastly, we prove a conjecture of Ergemlidze, Jiang and Methuku on the hypergraph analogue of the bipartite Tur\'an problem for graphs with bounded degrees on one side. Our tools include a novel twist on the dependent random choice method as well as a variant of the celebrated norm graphs constructed by Koll{\'a}r, R{\'o}nyai and Szab{\'o}.
\end{abstract}

\section{Introduction}

Let $H$ be an $r$-uniform hypergraph. The Tur\'an function $\ex(n,H)$ of $H$ is the largest number of edges in an $r$-uniform hypergraph on $n$ vertices with no copy of $H$. The study of the function $\ex(n,H)$ for various hypergraphs $H$ is one of the central problems of extremal combinatorics. In the graph case $r=2$, the Tur\'an function is fairly well understood unless $H$ bipartite. On the other hand, for $r \ge 3$, our understanding of the Tur\'an function is much worse and there is only a small number of tight results. For example, determining the answer for the $3$-uniform clique on $4$ vertices is still open. Given the difficulty of the problem even for hypergraph cliques, various hypergraphs originating from graphs have been considered for which better bounds can be obtained. Mubayi \cite{mubayi2006hypergraph} studied a hypergraph extension of the graph clique and his result was refined by Pikhurko \cite{Pikh05}. A different hypergraph extension of the triangle was introduced by Frankl \cite{Frankl1990AsymptoticSO} who determined the asymptotics of its Tur\'an number and an exact answer was given by Keevash and Sudakov \cite{KS05}. Sidorenko \cite{Sidorenko89} asymptotically determined the Tur\'an number of a hypergraph extension of trees. We refer the interested reader to an extensive survey of Keevash \cite{keevash_2011} on Tur\'{a}n problems for non--$r$-partite $r$-uniform hypergraphs.

It is well-known that for $r$-partite $H$, one has $\ex(n,H) = O(n^{r - \varepsilon})$ for some $\varepsilon = \varepsilon(H) > 0$ and the main goal here is to determine or estimate the best possible $\varepsilon(H)$.
One of the very old such Tur\'an-type questions for hypergraphs is a problem of Erd\H{o}s \cite{Erdos77}, asking for the maximum number $f_r(n)$ of edges in an $r$-uniform hypergraph on $n$ vertices which does not have four distinct edges $A,B,C,D$ satisfying $A \cup B = C \cup D$ and $A \cap B = C \cap D = \emptyset$. Note that in this problem forbidden hypergraphs originate quite naturally from a four-cycle. 
Erd\H{o}s in particular asked whether $f_r(n) = O(n^{r-1})$. 
This was answered affirmatively by F\"uredi \cite{Fur84}, who showed that $f_r(n) \leq 3.5 \binom{n}{r-1}$. 
Mubayi and Verstra\"ete \cite{MV04} extended Erd\H{o}s's question by considering the following family of $r$-uniform hypergraphs which generalize complete bipartite graphs: 
for positive integers $r,s,t$, let $K_{s,t}^{(r)}$ denote the $r$-uniform hypergraph whose vertex set consists of disjoint sets $X,Y_1,\dots,Y_t$, where $|X| = s$ and $|Y_1| = \dots = |Y_t| = r-1$, and whose edge set is $\{\{x\} \cup Y_i: x \in X, 1\leq i\leq t\}$. Note that $K_{s,t}^{(r)}$ is $r$-partite and for $r = 2$, $K_{s,t}^{(2)}$ is just the $s \times t$ complete bipartite graph. Observe that the edges of $K^{(r)}_{2,2}$ form a configuration $A,B,C,D$ as in the definition of $f_r(n)$. Hence, $f_r(n) \leq \ex(n,K_{2,2}^{(r)})$ (this is in fact an equality for $r = 3$). Mubayi and Verstra\"ete \cite{MV04} proved that $\ex(n,K_{2,2}^{(r)})\leq 3\binom{n}{r-1}+O(n^{r-2})$, improving the constant in F\"uredi's result.  Pikhurko and Verstra\"ete \cite{PV09} improved the coefficient of $\binom{n}{r-1}$ further. It remains open whether $\ex(n,K_{2,2}^{(r)}) = (1 + o(1)) \binom{n-1}{r-1}$, as conjectured by F\"uredi. That $\binom{n-1}{r-1}$ is a lower bound can be seen by considering the star, i.e. the hypergraph consisting of all edges containing a fixed vertex. 

Mubayi and Verstra\"ete \cite{MV04} initiated the study of $\ex(n,K_{s,t}^{(3)})$ for general $s,t$, and proved that
$\ex(n,K_{s,t}^{(3)})\leq C_{s,t}n^{3-1/s}$ as well as that $\ex(n,K_{s,t}^{(3)})\geq c_t n^{3-2/s}$ for $t>(s-1)!$. For small values of $s$, they obtained more accurate estimates. Namely, for $s=3$, they improved their bound to $\ex(n,K_{3,t}^{(3)})\leq C_tn^{13/5}$, while
 for $s = 2$, they showed that $\ex(n,K_{2,t}^{(3)})\leq t^4 \binom{n}{2}$ and that $\ex(n,K_{2,t}^{(3)})\geq \frac{2t-1}{3}\binom{n}{2}$ for infinitely many $n$. They further asked to determine the correct dependence of $\ex(n,K_{2,t}^{(3)})$ on $t$. Ergemlidze, Jiang and Methuku \cite{EJM20} improved the upper bound to $\ex(n,K_{2,t}^{(3)})\leq (15t\log t+40t)n^2$, leaving a $\log t$ gap from the lower bound of $\Omega(tn^2)$. For $r > 3$, little is known. Ergemlidze, Jiang and Methuku found a construction showing $\ex(n, K_{2,t}^{(4)}) = \Omega(tn^3)$. Xu, Zhang and Ge \cite{XZG20,XZG21} proved a tight bound on $\ex(n,K_{s,t}^{(r)})$ when $s$ is much larger than $t$, using a standard application of the random algebraic method of Bukh \cite{Bukh15}. 

Our first result, Theorem \ref{thm:r-uniform bipartite}, achieves two goals. First, it resolves the problem of Mubayi and Verstra\"ete by proving that $\ex(n,K_{2,t}^{(3)})= \Theta(tn^2)$. And second, it improves the upper bound of \cite{MV04} on $\ex(n,K_{s,t}^{(3)})$  {\em for every} $s$ and $t$ by reducing the exponent of $n$ from $3 - \frac{1}{s}$ to $3 - \frac{1}{s-1}$. We also obtain an analogous result for every $r \geq 3$.  
The proof of this bound relies on a new weighted variant of the dependent random choice method (see \cite{Fox2011DependentRC} for a description of the technique and a brief history).

\begin{theorem} \label{thm:r-uniform bipartite}
	 For any $s,t\geq 2$ and $r \geq 3$ there is a constant $C_s$ depending only on $s$ such that $$\ex(n,K_{s,t}^{(r)})\leq C_{s} t^{\frac{1}{s-1}}n^{r-\frac{1}{s-1}}.$$
	 \noindent
	 In particular, $$\ex(n,K_{2,t}^{(3)})\leq Ctn^2$$ for some absolute constant $C$.
\end{theorem}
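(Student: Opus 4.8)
Suppose for contradiction that $H$ is an $r$-uniform hypergraph on $n$ vertices with $m:=e(H)$ edges, no copy of $K_{s,t}^{(r)}$, and $m > C_s t^{1/(s-1)}n^{r-1/(s-1)}$; we may assume $m\ge n^{r-1}$. For an $(r-1)$-subset $S$ write $N(S):=\{v:S\cup\{v\}\in E(H)\}$ and $d(S):=|N(S)|$, so that $\sum_S d(S)=rm$ and $\sum_{S\ni c}d(S)=(r-1)\deg_H(c)$ for each vertex $c$; let $d_2(\{a,b\})$ be the number of edges through the pair $\{a,b\}$ and $\Delta_2:=\max_{\{a,b\}}d_2(\{a,b\})$. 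For an $s$-set $X$ put $\mathcal F_X:=\{S:X\subseteq N(S)\}$; any such $S$ is automatically disjoint from $X$. The reformulation I would start from is that $H$ is $K_{s,t}^{(r)}$-free exactly when, for every $s$-set $X$, the family $\mathcal F_X$ has no $t$ pairwise disjoint members. Hence $\mathcal F_X$ has a cover $C_X$ with $|C_X|\le(r-1)(t-1)$ (the union of a maximum matching of $\mathcal F_X$); moreover, if $c\in S\in\mathcal F_X$ then for any fixed $x_0\in X$ the $(r-2)$-set $S\setminus\{c\}$ extends the pair $\{c,x_0\}$ to an edge, so at most $d_2(\{c,x_0\})\le\Delta_2$ members of $\mathcal F_X$ pass through $c$. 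Therefore $|\mathcal F_X|\le(r-1)(t-1)\Delta_2$ for every $s$-set $X$, and the goal becomes to exhibit one $X$ violating this (equivalently, one with $t$ disjoint members in $\mathcal F_X$).

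The engine is a dependent-random-choice step. Sampling $a$ independent uniformly random $(r-1)$-sets $S_1,\dots,S_a$ and setting $A:=\bigcap_i N(S_i)$, one has $\mathbb E|A|=\sum_v\bigl(\deg_H(v)/\binom n{r-1}\bigr)^a\ge n\bar p^{\,a}$, where $\bar p:=rm/\bigl(n\binom n{r-1}\bigr)$, while $\Pr[X\subseteq A]=\bigl(|\mathcal F_X|/\binom n{r-1}\bigr)^a\le\beta^a$ with $\beta:=(r-1)(t-1)\Delta_2/\binom n{r-1}$ for every $s$-set $X$. A standard calculation shows that once $\bar p^{\,s}>\beta$ (up to a constant factor) one may choose $a$, of order $\log n/\log(n^r/m)$, so that $\mathbb E|A|-\binom ns\beta^a\ge s$; deleting a vertex from each "bad" $s$-set ($\nu(\mathcal F_X)<t$) inside such an $A$ leaves an $s$-set $X$ with $\nu(\mathcal F_X)\ge t$, i.e. a copy of $K_{s,t}^{(r)}$. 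Since $\bar p^{\,s}>\beta$ holds whenever $m^s\gtrsim_{s,r}t\Delta_2 n^{rs-r+1}$, we obtain for $K_{s,t}^{(r)}$-free $H$ the bound $m\lesssim_{s,r}(t\Delta_2)^{1/s}n^{\,r-(r-1)/s}$. Feeding in only the trivial bound $\Delta_2\le\binom{n-2}{r-2}$ reproduces the Mubayi--Verstra\"ete bound $O(t^{1/s}n^{r-1/s})$; but as soon as $\Delta_2$ lies below a threshold of order $t^{1/(s-1)}n^{\,r-1-s/(s-1)}$ (which is $tn^{r-3}$ when $s=2$) this already yields the desired $m\lesssim_{s,r}t^{1/(s-1)}n^{\,r-1/(s-1)}$; in particular the case $s=2$, $r=3$ gives $\ex(n,K_{2,t}^{(3)})=O(tn^2)$ in that regime.

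The remaining, and main, obstacle is the regime of large $2$-codegrees. Here the plan is to peel off the sub-hypergraph $H'$ consisting of edges meeting some "heavy" pair (one whose $2$-codegree exceeds the threshold $D$): there are $O_{r}(m/D)$ heavy pairs, and $K_{s,t}^{(r)}$-freeness should force the edges through them to form a structurally limited piece — morally a bounded union of "stars" and lower-uniformity dense spots — with $e(H')\lesssim_{s,r}t^{1/(s-1)}n^{\,r-1/(s-1)}$, while $H\setminus H'$ has $\Delta_2\le D$ and is handled by the previous paragraph. The delicate point is arranging the peeling so that the two contributions combine with no logarithmic loss; I expect this is done by folding the peeling and the random-choice step into a single weighted dependent-random-choice argument, sampling the $(r-1)$-sets from a distribution that discounts the heavy pairs so that the bound $|\mathcal F_X|\le(r-1)(t-1)\Delta_2$ is replaced, in the weighted count, by its analogue with $\Delta_2$ everywhere replaced by $D$. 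This is exactly what is needed to pin down $\ex(n,K_{2,t}^{(3)})\le Ctn^2$ with no extra $\log t$ factor, and it is the step I expect to require the most care.
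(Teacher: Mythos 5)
Your first regime is fine: the uniform dependent random choice with the cover bound $|\mathcal F_X|\le (r-1)(t-1)\Delta_2$ correctly yields $m\lesssim_{s,r}(t\Delta_2)^{1/s}n^{r-(r-1)/s}$, which beats the target only when $\Delta_2\lesssim t^{1/(s-1)}n^{r-1-s/(s-1)}$. But the entire difficulty of the theorem lives in the complementary regime, and there your proposal stops at a plan rather than a proof. The peeling step you sketch does not work as stated: the number of heavy pairs is at most $\binom r2 m/D$, but each can lie in up to $\binom{n-2}{r-2}$ edges, so the trivial bound on the peeled part is roughly $m\,n^{1/(s-1)}/t^{1/(s-1)}$ --- far larger than $m$ itself. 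There is no a priori structural reason why the edges through heavy pairs form a ``bounded union of stars'' with only $t^{1/(s-1)}n^{r-1/(s-1)}$ edges (a single full star already has $\Theta(n^{r-1})$ edges concentrated on heavy pairs), and any iterative peeling reintroduces exactly the logarithmic losses (as in Ergemlidze--Jiang--Methuku) that the theorem is designed to remove. You correctly guess that the fix is a weighted dependent random choice, but you do not supply the weighting or the argument, and you acknowledge this is ``the step I expect to require the most care''; as written, the proof is therefore incomplete at its central point.

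For comparison, the paper avoids the case split entirely with three intertwined devices. First, it preprocesses by repeatedly deleting edges containing an $(r-1)$-set of degree less than $D\approx t^{1/(s-1)}n^{1-1/(s-1)}$, so every surviving $(r-1)$-set has degree $0$ or at least $D$ (this costs at most half the edges). Second, it samples a \emph{single} $(r-1)$-tuple $(v_2,\dots,v_r)$ with probability proportional to $1/d(v_2,\dots,v_r)$, and --- crucially --- puts $v_1$ into $\vA$ only if the sampled tuple attains $\max_i d(v_1,\dots,v_{i-1},v_{i+1},\dots,v_r)$ over all $(r-1)$-subtuples of the edge. These two twists make the probability that $u_1,\dots,u_s\in\vA$ with $\vv_2,\dots,\vv_{r-1}$ fixed at most $D/n^{r-1}$ regardless of any codegree: the at most $d(u_1,v_2,\dots,v_{r-1})$ choices of $v_r$ each carry probability at most $D/(n^{r-1}d(u_1,v_2,\dots,v_{r-1}))$, so the codegree cancels. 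Third, the lower bound on the gain is run through $\mathbb E[|\vA|^s]$ via H\"older's inequality rather than $(\mathbb E|\vA|)^s$ via independent repetitions. If you want to complete your write-up along the paper's lines, these are the missing ingredients; your current text supplies none of them.
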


Our next result shows that, somewhat surprisingly, the bound in Theorem \ref{thm:r-uniform bipartite} is tight in terms of both $n$ and $t$ if the uniformity $r$ is even and $t \gg s$. Our construction uses as building blocks a variation of the norm graphs, introduced by Koll\'ar, R\'onyai and Szab\'o \cite{KRS96}, which might be of independent interest.

\begin{theorem} \label{thm:right dependence on t}
     For any positive integers $s\geq 2$ and $k$, there is a positive constant $c=c(k,s)$ such that for every integer $t>(s-1)!$, if $n$ is sufficiently large, then
     $$\ex(n,K_{s,t}^{(2k)})\geq ct^{\frac{1}{s-1}}n^{2k-\frac{1}{s-1}}.$$
\end{theorem}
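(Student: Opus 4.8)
The plan is to construct, for every fixed integer $t>(s-1)!$ and every sufficiently large $n$, a $2k$-partite $2k$-uniform hypergraph $\cH$ on $n$ vertices with no copy of $K_{s,t}^{(2k)}$ and with $\Omega_{s,k}\!\big(t^{\frac1{s-1}}n^{2k-\frac1{s-1}}\big)$ edges. The first step is a routine localisation. If $\cH$ is $2k$-partite with parts $V_1,\dots,V_{2k}$, then in any copy of $K_{s,t}^{(2k)}$ the set $X$ lies inside a single part $V_c$: fixing one $Y_j$, the $2k-1$ parts occupied by $Y_j$ force every vertex $x$ with $\{x\}\cup Y_j\in E(\cH)$ into the one remaining part, and this part is the same for all $Y_j$, which are therefore pairwise disjoint transversals of $\prod_{i\neq c}V_i$. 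Hence it suffices to arrange that for every $c$ and every $X\in\binom{V_c}{s}$ the family $\{Y\in\prod_{i\neq c}V_i:\{x\}\cup Y\in E(\cH)\text{ for all }x\in X\}$ contains no matching of size $t$.

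The building block is a variant of the Kollár--Rónyai--Szabó norm graph. Let $\tau\ge 1$ be the largest integer with $\tau^{s-1}(s-1)!<t$ (this exists precisely because $t>(s-1)!$, and $\tau=\Theta_s(t^{1/(s-1)})$), fix a prime power $q$ with $q\equiv 1\pmod\tau$ and with $q^{s-1}$ as large as possible subject to $2kq^{s-1}\le n$, and let $T\le\mathbb F_q^{*}$ be the subgroup of order $\tau$. Identify each part $V_i$ with $\mathbb F_{q^{s-1}}$ (padding with $n-2kq^{s-1}=o(n)$ isolated vertices), let $N=N_{\mathbb F_{q^{s-1}}/\mathbb F_q}$, and let $\cH$ consist of all tuples $(v_1,\dots,v_{2k})\in V_1\times\dots\times V_{2k}$ with $\prod_{l=1}^{k}N(v_{2l-1}-v_{2l})\in T$ (so in particular $v_{2l-1}\neq v_{2l}$ for each $l$, as $0\notin T$). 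Counting: the norm map is surjective onto $\mathbb F_q$ with every nonzero fibre of size $(q^{s-1}-1)/(q-1)$, and $|\{(a_1,\dots,a_k)\in(\mathbb F_q^{*})^k:\prod_la_l\in T\}|=(q-1)^{k-1}\tau$, so $|E(\cH)|=(1+o(1))\,q^{2k(s-1)-1}\tau=\Theta_{s,k}\!\big(\tau\,n^{2k-\frac1{s-1}}\big)=\Omega_{s,k}\!\big(t^{\frac1{s-1}}n^{2k-\frac1{s-1}}\big)$.

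It remains to rule out $K_{s,t}^{(2k)}$. Let $X=\{x_1,\dots,x_s\}\subseteq V_c$; by the symmetry of the construction we may assume $c=1$, so the relevant ``partner'' coordinate is $2$. Suppose $Y_1,\dots,Y_t$ are pairwise disjoint transversals of $V_2\times\dots\times V_{2k}$ with every $\{x_i\}\cup Y_j$ an edge; write $u_j=(Y_j)_2$ and $P_j=\prod_{l=2}^{k}N\big((Y_j)_{2l-1}-(Y_j)_{2l}\big)$, which is nonzero since the edge $\{x_1\}\cup Y_j$ exists. For every $i,j$ the edge condition says $N(x_i-u_j)\,P_j\in T$, hence $N(x_i-u_j)\in T/P_j$; in particular $N(x_1-u_j)\neq0$, and because $T$ is a \emph{subgroup} we get $\mu_i^{(j)}:=N(x_i-u_j)/N(x_1-u_j)\in T/T=T$ for all $i$. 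Thus each $u_j$ is a solution of the system $N(x_i-u)=\mu_i N(x_1-u)$, $i=2,\dots,s$, for some $(\mu_2,\dots,\mu_s)\in T^{s-1}$. The core of the argument is the ``variant norm graph'' lemma: for fixed distinct $x_1,\dots,x_s$ and fixed $(\mu_2,\dots,\mu_s)$ this system has at most $(s-1)!$ solutions $u\in\mathbb F_{q^{s-1}}$ (after the substitution $x_1-u=v^{-1}$ it becomes $N(1+(x_i-x_1)v)=\mu_i$, $i=2,\dots,s$, a system of $s-1$ norm equations in the $(s-1)$-dimensional space $\mathbb F_{q^{s-1}}$, handled by a Kollár--Rónyai--Szabó--type resultant computation). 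Hence $u_1,\dots,u_t$ all lie in a set of size at most $|T|^{s-1}(s-1)!=\tau^{s-1}(s-1)!<t$; but disjointness of the $Y_j$ makes the $u_j$ distinct, a contradiction.

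The main obstacle is the norm-equation lemma of the last paragraph. Norm forms are degenerate at infinity — every top-degree part of the equations $N(1+(x_i-x_1)v)=\mu_i$ is proportional to $N(v)$ — so Bézout does not apply directly, and the Kollár--Rónyai--Szabó elimination argument must be carried out with care; this is exactly the point at which a suitably modified, ``projective''-type variant of the norm graph is convenient, in order to discard the degenerate configurations cleanly and get a clean constant such as $(s-1)!$. The remaining points are routine: that prime powers $q\equiv1\pmod\tau$ with $q^{s-1}$ in any given window $[(1-o(1))\tfrac n{2k},\tfrac n{2k}]$ are plentiful for $n$ large (primes in arithmetic progressions, $\tau$ being a constant), and that the isolated-vertex padding affects neither the edge count nor $K_{s,t}^{(2k)}$-freeness.
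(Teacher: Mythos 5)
Your proposal is correct and follows essentially the same route as the paper: the same norm-map construction over $\mathbb{F}_{q^{s-1}}$ with a multiplicative subgroup $T\le\mathbb{F}_q^{\times}$ of order $\Theta_s(t^{1/(s-1)})$, the same multiplicative condition across the $k$ pairs of parts forcing all $N(x_i-u_j)$ into a common coset of $T$, and the same reduction (via $v=(x_1-u)^{-1}$) to the Koll\'ar--R\'onyai--Szab\'o bound of $(s-1)!$ solutions for the resulting system of norm equations. The only difference is presentational: you give one direct construction with an exact edge count, whereas the paper factors the argument into a general gluing lemma for edge-disjoint bipartite graphs plus a separate norm-graph decomposition and an averaging over cosets.
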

By combining Theorems \ref{thm:r-uniform bipartite} and \ref{thm:right dependence on t}, we see that 
$\ex(n,K_{s,t}^{(r)}) = \Theta_{r,s}(t^{\frac{1}{s-1}}n^{r-\frac{1}{s-1}})$ if $r\geq 4$ is even and $t > (s-1)!$. (Here and elsewhere in the paper, $\Theta_{r,s}$ means that the implied constants can depend on $r$ and $s$.) In the special case $s = 2$, this gives $\ex(n,K_{2,t}^{(r)}) = \Theta_r(t n^{r-1})$ for even $r\geq 4$. This partially answers a question of Ergemlidze, Jiang and Methuku \cite{EJM20}, who asked to determine the dependence of $\ex(n,K_{2,t}^{(r)})$ on $t$.  
Also, Theorem \ref{thm:right dependence on t} disproves a conjecture of Xu, Zhang and Ge \cite[Conjecture 5.1]{XZG21} which stated that $\ex(n,K_{s,t}^{(r)}) = \Theta_{r,s,t}(n^{r-2/s})$ for all $2 \leq s \leq t$.  

It is natural to ask whether the bound in Theorem \ref{thm:r-uniform bipartite} is tight for odd uniformities as well. Our next theorem shows that this is {\em not} the case for $r = 3$. This indicates that, perhaps surprisingly, the parity of $r$ may play a role. 

\begin{theorem} \label{thm:eps improvement}
     For any $s\geq 3$, there exists some $\eps>0$ such that for any $t$, $$\ex(n,K_{s,t}^{(3)})\leq C_{s,t} n^{3-\frac{1}{s-1}-\eps}.$$
\end{theorem}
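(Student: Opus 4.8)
The plan is to bootstrap Theorem~\ref{thm:r-uniform bipartite} by a density-increment argument that, crucially, is available only because $r=3$ is odd. Write $m=e(G)$ for a $K_{s,t}^{(3)}$-free $3$-graph $G$ on $n$ vertices; for a pair $p$ let $d(p)$ be its codegree and $N(p)$ its set of co-neighbours, and recall that the link $L_v=\{p:v\in N(p)\}$ of a vertex is an ordinary graph, so that $K_{s,t}^{(3)}$-freeness says exactly that for every $s$-set $X$ the graph $\bigcap_{x\in X}L_x$ has no matching of size $t$ avoiding $X$. I would first clean $G$ in the standard way, iteratively deleting vertices of degree below $m/2n$, to assume that all vertex-degrees are $\Theta(m/n)$ while keeping at least $m/2$ edges.

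The heart of the argument is a dichotomy for the cleaned $G$: either (i) $m\le C_{s,t}\,n^{3-\frac1{s-1}-\varepsilon_s}$ and we are done, or (ii) there is an induced sub-$3$-graph on $n'\le n$ vertices with $m'$ edges for which $m'/(n')^{3-\frac1{s-1}}\ge n^{\Omega_s(1)}\cdot m/n^{3-\frac1{s-1}}$, i.e. one that is polynomially denser in the normalisation of Theorem~\ref{thm:r-uniform bipartite}. Granting the dichotomy one simply iterates: each use of (ii) multiplies the normalised density by a fixed positive power of $n$, but by Theorem~\ref{thm:r-uniform bipartite} that density is always $O_s(t^{1/(s-1)})$, so (ii) can be invoked only a bounded number of times (the bound depending on $s$ and $t$), after which (i) must hold. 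This is also where the final multiplicative constant $C_{s,t}$ picks up its dependence on $t$, whereas the exponent gain $\varepsilon_s$ depends only on $s$.

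To prove the dichotomy one must understand how $G$ can be dense without containing $K_{s,t}^{(3)}$. The key identity is that $\deg_{\bigcap_{x\in X}L_x}(b)=\bigl|\bigcap_{x\in X}N(\{b,x\})\bigr|$ is the size of the common neighbourhood of $X$ in the \emph{link} $L_b$; since $\bigcap_{x\in X}L_x$ has no $t$-matching it has a vertex cover of size below $2t$, so for it to have many edges the set $X$ must have an almost complete common neighbourhood in a positive fraction of roughly $t$ distinct links $L_b$. Running the weighted dependent random choice behind Theorem~\ref{thm:r-uniform bipartite}, but now tracking these ``heavy'' vertices $b$ and the ``heavy'' codegree pairs (those with $d(p)\gg m/n^2$), one shows that unless some sub-$3$-graph is polynomially denser, the heavy structure must be spread across polynomially many vertices; a counting argument quantitatively finer than the one proving Theorem~\ref{thm:r-uniform bipartite} then produces $t$ \emph{pairwise disjoint} pairs $p_1,\dots,p_t$ together with an $s$-set contained in $\bigcap_i N(p_i)$ and disjoint from $\bigcup_i p_i$ --- a copy of $K_{s,t}^{(3)}$, contradiction. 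For even $r$ this step must fail, and indeed the norm-graph construction of Theorem~\ref{thm:right dependence on t} is the prototypical dense $K_{s,t}^{(r)}$-free configuration whose analogous heavy pairs are irreducibly clustered, so no contradiction arises and the exponent of Theorem~\ref{thm:r-uniform bipartite} is attained.

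The main obstacle is entirely quantitative: one must make the density increment in (ii) genuinely polynomial, $n^{\Omega_s(1)}$, rather than $1+o(1)$, since a sub-polynomial gain would only improve Theorem~\ref{thm:r-uniform bipartite} by an $n^{o(1)}$ factor and not the required $n^{\varepsilon_s}$. This forces one to carry explicit exponents through the clean-up and the dependent-random-choice estimate, to pin down the right threshold ($\approx m/n^2$) for heavy codegrees, and to show that if the heavy pairs support more than an $n^{-\Omega(1)}$-fraction of the edges then they either already contain the required $t$ disjoint pairs or concentrate on a vertex set polynomially smaller than $V(G)$, which yields the increment. A secondary difficulty is organising the recursion so that all of the $t$-dependence is quarantined in the constant, making $\varepsilon_s$ uniform over all $t$ --- this is a genuine constraint, since for $s=2$ one has $\ex(n,K_{2,t}^{(3)})=\Theta(tn^{3-\frac1{s-1}})$ and no improvement of this kind is possible at all.
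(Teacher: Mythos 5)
There is a genuine gap: the entire difficulty of the theorem is concentrated in your ``dichotomy,'' and you do not prove it. The step ``unless some sub-$3$-graph is polynomially denser, the heavy structure must be spread across polynomially many vertices; a counting argument quantitatively finer than the one proving Theorem~\ref{thm:r-uniform bipartite} then produces $t$ pairwise disjoint pairs together with an $s$-set in their common neighbourhood'' is an assertion of the conclusion, not an argument. The obstruction it has to overcome is precisely the one that makes the theorem hard: an $s$-set can have common neighbourhood of size $\approx n^{1-\frac{1}{s-1}}$ concentrated on few links, and producing $t$ disjoint pairs requires locating an $s$-set whose set of supporting pairs $(x',z')$ is too large to be covered by $2t-2$ vertices. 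Nothing in the weighted dependent random choice of Theorem~\ref{thm:r-uniform bipartite}, however it is ``tracked,'' delivers that; the outer iteration and the clean-up are routine and do not substitute for it.

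Moreover, the density-increment mechanism you propose is most likely not the right one, because in a near-extremal $K_{s,t}^{(3)}$-free hypergraph there is no reason to find a sub-$3$-graph that is polynomially denser in the normalisation $m/n^{3-\frac{1}{s-1}}$. What the paper extracts instead (Lemma~\ref{lem:find structure}) is a \emph{structured} subhypergraph with essentially the same number of edges: after the codegree reduction of Lemma~\ref{lem:max degree}, the link graph of each surviving vertex $z$ is shown (via Lemma~\ref{lem:partition}) to be block-diagonal, i.e.\ supported on disjoint pairs $X_i\times Y_i$ with $|X_i|=|Y_i|=n^{1-\frac{1}{s-1}}$ --- the ``$s$-nice'' property. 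The density gain appears only inside these blocks of the $2$-dimensional links, not in the $3$-graph. The contradiction is then derived in Lemma~\ref{lem:structure enough}: counting copies of $K_{1,s,1}^{(3)}$ whose $s$-set lies in a single block $Y'$ of size $n^{1-\frac{1}{s-1}}$ and applying pigeonhole over the only $|Y'|^s=n^{s-1-\frac{1}{s-1}}$ candidate $s$-sets yields one $s$-set supported on $n^{2-\frac{2}{s-1}-O_s(\eps)}$ pairs $(x',z')$, which exceeds the $O(t)\cdot n^{1-\frac{1}{s-1}+\eps}$ that a $(2t-2)$-vertex cover could account for. It is this concentration of the $s$-set inside a block of size $n^{1-\frac{1}{s-1}}$ (rather than all of $Y$) that makes the pigeonhole work and is entirely absent from your plan; your explanation of why the argument must fail for even $r$ is likewise a plausibility remark rather than a proof. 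To repair the proposal you would need to replace the dichotomy by a structural statement of this kind and supply the block-extraction and counting arguments.
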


Theorems \ref{thm:right dependence on t} and \ref{thm:eps improvement} together show that $\ex(n,K_{s,t}^{(r)})/n^{r-1}$ has a different order of magnitude for even $r \geq 4$ and for $r = 3$. Indeed,
for even $r$ the function $\ex(n,K_{s,t}^{(r)})/n^{r-1}$ is asymptotically $\Theta_{r,s,t}(n^{1 - \frac{1}{s-1}})$, assuming $s \ll t$, while for $r = 3$ this function is smaller by at least a factor of $n^{\varepsilon}$. 
This runs contrary to a claim made in \cite[Proposition 1]{EJM20} that $\ex(n,K_{s,t}^{(r)}) \leq O_{r,s,t}(n^{r-3}) \cdot \ex(n,K_{s,t}^{(3)})$. One can check that the proof suggested in \cite{EJM20} is incorrect. Moreover, as we now see, the statement itself is disproved by Theorems \ref{thm:right dependence on t} and \nolinebreak \ref{thm:eps improvement}.

It would be very interesting to determine if Theorem \ref{thm:eps improvement} can be extended to all odd uniformities $r$. If so, then this would be a rare example of an extremal problem where the answer depends on the parity of the uniformity. (See \cite{KS05} for another hypergraph Tur\'an problem where the extremal construction depends heavily on number theoretic properties of the parameters.) The first open case is $r = 5$: is it true that $\ex(n,K_{s,t}^{(5)})=O(n^{5-\frac{1}{s-1}-\eps})?$ 

We end with some results for a more general family of hypergraphs. Let $G$ be a bipartite graph with an ordered bipartition $(X,Y)$, $Y = \{y_1,\dots,y_m\}$. Following \cite{EJM20}, we define $G_{X,Y}^{(r)}$ to be the $r$-uniform hypergraph whose vertex set consists of disjoint sets $X,Y_1,\dots,Y_m$, $|Y_1| = \dots = |Y_m| = r-1$, and whose edge set is $\{\{x\} \cup Y_i : \{x,y_i\} \in E(G)\}$. Note that if $G=K_{s,t}$ with $X$ being the part of size $s$ and $Y$ being the part of size $t$, then $G_{X,Y}^{(r)}=K_{s,t}^{(r)}$.
Ergemlidze, Jiang and Methuku \cite{EJM20} asked whether it is true that if all vertices in $Y$ have degree at most $2$ in $G$, then $\ex(n,G_{X,Y}^{(r)}) = O(n^{r-1})$ where the implied constant depends only on $G$ and $r$. Here we resolve this conjecture in greater generality.

\begin{theorem}\label{thm:degree-s on one side}
     Let $s\geq 2$, $r\geq 3$ and let $G$ be a bipartite graph with an ordered bipartition $(X,Y)$ such that every vertex in $Y$ has degree at most $s$. Then $\ex(n,G_{X,Y}^{(r)})\leq Cn^{r-\frac{1}{s-1}}$ where $C$ only depends on $G$ and $r$.
\end{theorem}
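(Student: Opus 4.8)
The plan is to reduce the general case to the case $G = K_{s,t}^{(r)}$ already handled by Theorem \ref{thm:r-uniform bipartite}, and in fact to use only the upper bound from that theorem as a black box. First I would set $m = |Y|$ and $t = m$, so that every vertex of $Y$ has degree at most $s$ but the total number of vertices of $G$ is bounded in terms of $G$ alone. The key observation is that $G_{X,Y}^{(r)}$ embeds into $K_{s,t}^{(r)}$ once $t$ is large enough: indeed, if we take $K_{s,t}^{(r)}$ with the $s$-set $X' = \{x_1,\dots,x_s\}$ and sets $Y_1',\dots,Y_t'$ of size $r-1$, then for each $y_i \in Y$ with neighbourhood $N_G(y_i) \subseteq X$, $|N_G(y_i)| \le s$, we can identify $Y_i$ with $Y_i'$ and identify the neighbours of $y_i$ with a subset of $X'$ of the appropriate size. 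Since distinct $y_i, y_j$ use disjoint $(r-1)$-sets $Y_i', Y_j'$, this is a legitimate subhypergraph embedding provided $t \ge m$ and $|X| \le s$; after relabelling we may assume $|X| = s$ (adding isolated vertices to $X$ only makes $G_{X,Y}^{(r)}$ larger, which is fine for an upper bound). Hence a $G_{X,Y}^{(r)}$-free hypergraph is in particular $K_{s,t}^{(r)}$-free — wait, the containment goes the other way.

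Let me redo this: the correct direction is that $K_{s,t}^{(r)}$ \emph{contains} $G_{X,Y}^{(r)}$ as a subhypergraph whenever $t \ge m$ and (after padding) $|X| = s$. Therefore any hypergraph that is $G_{X,Y}^{(r)}$-free is also $K_{s,t}^{(r)}$-free is \emph{false}; rather, any hypergraph containing $K_{s,t}^{(r)}$ contains $G_{X,Y}^{(r)}$, so a $G_{X,Y}^{(r)}$-free hypergraph is $K_{s,t}^{(r)}$-free. That is exactly what we want: it gives $\ex(n, G_{X,Y}^{(r)}) \le \ex(n, K_{s,t}^{(r)})$ with $t = m = |Y|$. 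Applying Theorem \ref{thm:r-uniform bipartite} with this value of $t$ yields $\ex(n, G_{X,Y}^{(r)}) \le C_s t^{1/(s-1)} n^{r - 1/(s-1)} = C_s |Y|^{1/(s-1)} n^{r-1/(s-1)}$, and since $|Y|$ depends only on $G$, the whole prefactor $C_s |Y|^{1/(s-1)}$ is a constant $C = C(G, r)$ (even just $C(G,s)$, and $s \le |X| \le |V(G)|$). This gives the bound $\ex(n, G_{X,Y}^{(r)}) \le C n^{r - 1/(s-1)}$ as claimed.

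The only genuine point to verify carefully is the embedding $G_{X,Y}^{(r)} \subseteq K_{s,t}^{(r)}$, and the main subtlety there is the padding step: if some vertex of $Y$ has degree strictly less than $s$, or if $|X| < s$, we must check that enlarging $X$ to have size exactly $s$ (and correspondingly extending some $y_i$'s neighbourhoods) only adds edges to $G_{X,Y}^{(r)}$ and hence only strengthens the non-containment hypothesis — this is immediate since adding edges to $G$ adds edges to $G_{X,Y}^{(r)}$. One also checks the degenerate case $|X| < s$ cannot arise in a way that breaks the argument, because we may always add isolated vertices to $X$ (they contribute nothing to $E(G)$ and nothing to $G_{X,Y}^{(r)}$) until $|X| = s$; if $|Y| = 0$ the statement is trivial. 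I do not expect any real obstacle here; essentially the entire content of the theorem is already contained in Theorem \ref{thm:r-uniform bipartite}, and this final statement is a routine monotonicity-and-embedding corollary whose proof should occupy only a few lines.
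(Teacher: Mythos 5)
Your reduction has a fatal gap: it only works when $|X|\le s$, but the theorem places no upper bound on $|X|$ — the parameter $s$ bounds the degrees of the vertices in $Y$, not the size of $X$. In your padding step you ``add isolated vertices to $X$ until $|X|=s$'', which silently assumes $|X|\le s$ to begin with. When $|X|>s$ (the interesting case, and the whole point of the theorem), there is no injection of $X$ into the $s$-element side of $K_{s,t}^{(r)}$, so $G_{X,Y}^{(r)}$ is \emph{not} a subhypergraph of $K_{s,t}^{(r)}$ and the inequality $\ex(n,G_{X,Y}^{(r)})\le \ex(n,K_{s,|Y|}^{(r)})$ fails to follow. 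A concrete example is $G=C_{2t}$ with the natural bipartition: every vertex of $Y$ has degree $s=2$, yet $|X|=t$, and the theorem (together with Theorem \ref{thm:cycle}) asserts the bound $O(n^{r-1})$. Your argument could only reduce this to $K_{t,t}^{(r)}$-freeness, giving the much weaker exponent $r-\frac{1}{t-1}$. In short, the statement is genuinely stronger than a monotonicity corollary of Theorem \ref{thm:r-uniform bipartite}.

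The paper's proof instead uses the full strength of Theorem \ref{thm:dependent random choice}: applying it with $t=|V(G_{X,Y}^{(r)})|$ and $\alpha=t^s$ produces a set $A$ with $|A|\ge |X|$ in which the proportion of $s$-subsets that are \emph{not} $t$-rich is below $\binom{|X|}{s}^{-1}$, so a deletion/averaging argument yields $A'\subseteq A$ with $|A'|=|X|$ in which \emph{every} $s$-subset is $t$-rich. One then embeds $X$ arbitrarily into $A'$ and embeds the sets $Y_1,\dots,Y_m$ greedily: each $y_i\in Y$ has at most $s$ neighbours in $X$, their images lie in a $t$-rich $s$-set, and since $t$ equals the total number of vertices of $G_{X,Y}^{(r)}$ one of the $t$ pairwise disjoint witnessing $(r-1)$-sets avoids everything used so far. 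If you want to salvage your write-up, this ``almost all $s$-sets rich, hence some large subset with all $s$-sets rich'' step is the missing idea; the black-box use of Theorem \ref{thm:r-uniform bipartite} alone cannot deliver it.
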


\noindent Note that Theorem \ref{thm:right dependence on t} shows that this bound can be attained whenever $r$ is even.

Finally, we consider the hypergraph $G_{X,Y}^{(r)}$ when $G=C_{2t}$ is the cycle of length $2t$. Let us write $C_{2t}^{(r)}$ for this hypergraph. In an unpublished work, Jiang and Liu showed that $\Omega_r(tn^{r-1}) \leq \ex(n,C_{2t}^{(r)}) \leq O_r(t^5n^{r-1})$. The lower bound is obtained by taking all edges which contain one of $t-1$ vertices. This hypergraph has cover number $t-1$, so it cannot contain $C_{2t}^{(r)}$, which has cover number $t$. Ergemlidze, Jiang and Methuku \cite{EJM20} improved the upper bound to $\ex(n,C_{2t}^{(r)}) \leq O_r(t^2(\log t) n^{r-1})$. We determine the correct dependence on $t$.
\begin{theorem}\label{thm:cycle}
     For every $t \geq 2$ and $r \geq 3$, we have that $\ex(n,C_{2t}^{(r)}) = \Theta_r(tn^{r-1})$.
\end{theorem}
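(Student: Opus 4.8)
I would first dispose of the lower bound, which is the construction recalled above: the $r$-graph consisting of all $r$-sets meeting a fixed set of $t-1$ vertices has $\binom{n}{r}-\binom{n-t+1}{r}=\Theta_r(tn^{r-1})$ edges and cover number $t-1$, and since the $t$ edges $\{x_i\}\cup Y_i$ of $C_{2t}^{(r)}$ are pairwise disjoint, $C_{2t}^{(r)}$ has cover number $t$; hence this $r$-graph is $C_{2t}^{(r)}$-free. So the whole task is the matching upper bound $\ex(n,C_{2t}^{(r)})\le O_r(t)\,n^{r-1}$.

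My plan for it rests on a reduction to a codegree condition. For an $r$-graph $H$ and distinct vertices $u,v$, write $\mathrm{cod}(u,v)$ for the number of $(r-1)$-sets $S$ with $\{u\}\cup S,\{v\}\cup S\in E(H)$, and set $D:=2rt\,n^{r-2}$. The reduction is: \emph{if $H$ has $t$ distinct vertices $x_1,\dots,x_t$ with $\mathrm{cod}(x_i,x_j)>D$ for all $i\ne j$, then $C_{2t}^{(r)}\subseteq H$.} To see this, fix a cyclic order of the $x_i$ and greedily pick pairwise disjoint $(r-1)$-sets $Y_1,\dots,Y_t$, also disjoint from $\{x_1,\dots,x_t\}$, where $Y_i$ is taken among the more than $D$ sets $S$ with $\{x_i\}\cup S,\{x_{i+1}\}\cup S\in E(H)$ so as to miss the at most $rt$ already-used vertices; since forbidding one vertex kills at most $n^{r-2}$ admissible sets and $D>rt\cdot n^{r-2}$, this always succeeds, and $x_1,\dots,x_t$ together with $Y_1,\dots,Y_t$ span a copy of $C_{2t}^{(r)}$. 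Note that I am asking the $x_i$ to form a \emph{clique} in the auxiliary ``high-codegree graph'' of $H$; this is a little more than strictly necessary (a cycle of length $t$ would suffice), but it is convenient, because a clique on $\ge t$ vertices contains $C_t$ for every parity of $t$, which sidesteps the failure of ``many edges $\Rightarrow C_t$'' for odd $t$.

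It thus suffices to show that $e(H)\ge C_r\,t\,n^{r-1}$, for a suitable constant $C_r$, forces $t$ vertices of pairwise codegree exceeding $D$. I would first clean $H$: iteratively delete any vertex of degree $<(r+1)t\,n^{r-2}$ and any $(r-1)$-set lying in $<2(r+1)t$ edges (together with the edges through it). Since there are $n$ vertices and $\binom{n}{r-1}$ candidate $(r-1)$-sets, this deletes only $O_r(t)\,n^{r-1}$ edges, so for $C_r$ large a non-empty $H'\subseteq H$ survives in which every vertex has degree $\ge(r+1)t\,n^{r-2}$ and every occurring $(r-1)$-set lies in $\ge 2(r+1)t$ edges; the clique is then to be found in $H'$.

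This last step is where I expect the real difficulty, and where the weighted dependent random choice developed for Theorems~\ref{thm:r-uniform bipartite} and~\ref{thm:degree-s on one side} is needed: naive averaging fails because a single pair can have codegree as large as $\binom{n}{r-1}$, so $\sum_{\{u,v\}}\mathrm{cod}(u,v)=\sum_{S}\binom{|N_{H'}(S)|}{2}$ is far too concentrated to force many high-codegree pairs (and the cover-number construction is exactly of this concentrated shape). Instead one samples a random family of $(r-1)$-sets with probabilities weighted by the degree sequence of $H'$, lets $U$ be the set of vertices joined in the incidence bipartite graph to every sampled set, and picks the number of samples so that simultaneously the guaranteed lower bound on $\mathbb{E}|U|$ exceeds $t$ and the expected number of pairs of codegree $\le D$ in $U$ is suitably small; deleting one endpoint of each bad pair leaves $\ge t$ vertices of pairwise codegree $>D$. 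The delicate point, and the source of the \emph{linear} (rather than polynomial) dependence on $t$, is to run this computation with the threshold $D=\Theta_r(t)\,n^{r-2}$ and the cleaning thresholds all linear in $t$, so that everything already works at density $C_r\,t\,n^{r-1}$ with $C_r$ independent of $t$; the bounded range $t\le C_r'$, where these estimates are weakest, can instead be disposed of by applying Theorem~\ref{thm:degree-s on one side} with $s=2$ to $G=C_{2t}$, whose implied constant is then $O_r(1)$. Combining the ranges gives $\ex(n,C_{2t}^{(r)})=O_r(tn^{r-1})$, which with the lower bound yields $\Theta_r(tn^{r-1})$.
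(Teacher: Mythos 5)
Your lower bound and your reduction (any $t$ vertices of pairwise codegree greater than $2rt\,n^{r-2}$ span a $C_{2t}^{(r)}$ via a greedy choice of the $Y_i$) are correct and essentially identical to the paper's. The gap is exactly in the step you flag as the real difficulty: producing those $t$ vertices. Your plan is to extract a \emph{clique} in the high-codegree auxiliary graph by a dependent random choice followed by ``deleting one endpoint of each bad pair'', but at edge density $\Theta_r(tn^{r-1})$ this cannot work. Both the density of a typical link and the codegree threshold $D/\binom{n}{r-1}$ are of order $t/n$, so in any sampling scheme the expected number of low-codegree pairs inside the output set $U$ exceeds $\mathbb{E}|U|$ by roughly a factor of $n$ (in the paper's weighted version with $s=2$, $\mathbb{E}|\vA|\leq D$ while the bound of Claim 5 only gives $\mathbb{E}[\vb]\lesssim r^2tD$); deletion therefore cannot leave anything, let alone a clique of size $t$. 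What the method actually yields (Theorem \ref{thm:dependent random choice} with $s=2$) is a set $A$ in which the \emph{proportion} of non-rich pairs is at most $\alpha^{-1}$, at the cost of a factor $\alpha$ in the edge count. Converting that into a genuine $K_t$ of the rich-pairs graph means finding an independent set of size $t$ in a graph of average degree $|A|/\alpha$, which forces $\alpha\gtrsim t$ and hence an $O_r(t^2n^{r-1})$ hypothesis --- precisely the loss you are trying to avoid. Your fallback via Theorem \ref{thm:degree-s on one side} only covers bounded $t$, so the large-$t$ case remains open in your argument.

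The repair is to drop the clique, as you half-suspect, and to handle the parity of $t$ not by Tur\'an/Ramsey but by a minimum-degree cleaning inside the rich-pairs graph. Apply Theorem \ref{thm:dependent random choice} with $s=2$, $\alpha=100$ and $rt$ in place of $t$ to get $A$ with $|A|\geq 100rt$ and at most a $1/100$ fraction of non-rich pairs; iteratively discard any vertex rich with fewer than $3/4$ of the current set. A counting argument shows the process stops at some $A'$ with $|A'|\geq 4t$, in which any two vertices have at least $|A'|/2\geq 2t$ common rich-neighbours, so the cycle $x_1,\dots,x_t$ with all consecutive pairs (including $x_tx_1$) rich is built greedily regardless of whether $t$ is odd or even. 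This is the paper's argument, and it keeps the constant $O_r(1)$ uniformly in $t$ with no case split.
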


The rest of the paper is organized as follows. In Section \ref{sec:upper}, we prove a general result which implies Theorems \ref{thm:r-uniform bipartite}, \ref{thm:degree-s on one side} and \ref{thm:cycle}. In Section \ref{sec:lower}, we prove Theorem \ref{thm:right dependence on t}. In Section \ref{sec:improved upper}, we prove Theorem \ref{thm:eps improvement}. In Section \ref{sec:concluding remarks}, we give some concluding remarks.

\section{Upper bounds} \label{sec:upper}

In what follows, for an $r$-uniform hypergraph $\cG$ and a set $S=\{v_1,\dots,v_{r-1}\}\subset V(\cG)$, we write $d_{\cG}(S)$ or $d_{\cG}(v_1,\dots,v_{r-1})$ for the number of vertices $v_r\in V(\cG)$ such that $v_1v_2\dots v_r\in E(\cG)$. We omit the subscript when the hypergraph is clear. Given a vertex $v \in V(\cG)$, the link hypergraph of $v$ (with respect to $\cG$) is the $(r-1)$-uniform hypergraph containing all $(r-1)$-sets which together with $v$ form an edge in $\cG$.

\begin{definition}
    In an $r$-uniform hypergraph $\cG$, we call a set $S\subset V(\cG)$ \emph{$t$-rich} if there are sets $T_1,T_2,\dots,T_t\subset V(\cG)$ of size $r-1$ such that $S,T_1,\dots,T_t$ are pairwise disjoint and $\{u\}\cup T_i\in E(\cG)$ for every $u\in S$ and $i\in [t]$.
\end{definition}

Note that if $\cG$ has any $t$-rich set of size $s$, then it contains $K_{s,t}^{(r)}$ as a subgraph.

\begin{theorem} \label{thm:dependent random choice}
    Let $\alpha>1$ be a real number and let $r\geq 3$, $s\geq 2$, $t$ and $n$ be positive integers. Then there is a constant $C$ which depends only on $s$ such that the following holds. If $\cG$ is an $n$-vertex $r$-uniform hypergraph with at least $C\alpha^{\frac{1}{s-1}}t^{\frac{1}{s-1}}n^{r-\frac{1}{s-1}}$ hyperedges, then there is a set $A\subset V(\cG)$ of size at least $\alpha^{\frac{1}{s-1}}t^{\frac{1}{s-1}}n^{1-\frac{1}{s-1}}$ (and at least $s$) such that the proportion of $t$-rich sets of size $s$ in $A$ is at least $1-\alpha^{-1}$.
\end{theorem}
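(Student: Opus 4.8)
The plan is to run a weighted form of dependent random choice. For an $s$-element set $S\subseteq V(\cG)$, write
\[
N(S) \;=\; \Bigl\{\, W\in \tbinom{V(\cG)\setminus S}{r-1} \;:\; \{u\}\cup W\in E(\cG)\ \text{for all } u\in S \,\Bigr\},
\]
so that $S$ is $t$-rich exactly when $N(S)$ contains $t$ pairwise disjoint members. The starting point is a greedy observation: for a vertex $v$ and a fixed $u\in S$, the number of $W\in N(S)$ containing $v$ is at most the number of edges of $\cG$ through the pair $\{u,v\}$; hence, writing $\Delta_2$ for the maximum number of edges through a pair of vertices, every $S$ with $|N(S)|>(r-1)t\,\Delta_2$ is $t$-rich, since after choosing $i<t$ pairwise disjoint members of $N(S)$ at most $i(r-1)\Delta_2<(r-1)t\Delta_2$ of the remaining members can have been blocked. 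So it would suffice to find a large $A$ in which almost all $s$-sets $S$ have $|N(S)|$ comfortably above $t\Delta_2$. A global bound $\Delta_2\le D$ with $D$ small enough to make this work is, however, impossible in general (take $\cG=K_n^{(r)}$), so the weighting built into the random choice below has to do this locally.

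Now for the random choice: I would pick a uniformly random edge $e\in E(\cG)$ and then a uniformly random vertex $v\in e$, and set $W:=e\setminus\{v\}$ and $A:=\{u\in V(\cG):\{u\}\cup W\in E(\cG)\}$. Thus $W$ is chosen with probability proportional to its co-degree $d(W):=|\{z:\{z\}\cup W\in E(\cG)\}|=|\{e\in E(\cG):W\subseteq e\}|$, and $|A|=d(W)$. Writing $m=|E(\cG)|$, Cauchy--Schwarz gives
\[
\mathbb{E}|A| \;=\; \frac{1}{rm}\sum_{W}d(W)^2 \;\ge\; \frac{1}{rm}\cdot\frac{\bigl(\sum_W d(W)\bigr)^2}{\binom{n}{r-1}} \;=\; \frac{rm}{\binom{n}{r-1}},
\]
the sums being over all $(r-1)$-subsets $W$ of $V(\cG)$; since $m\ge C\alpha^{\frac1{s-1}}t^{\frac1{s-1}}n^{r-\frac1{s-1}}$, taking $C$ large in terms of $s$ makes $\mathbb{E}|A|\ge 2^{s}L$ where $L:=\alpha^{\frac1{s-1}}t^{\frac1{s-1}}n^{1-\frac1{s-1}}$. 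It then remains to control $Y:=\#\{\,S\in\binom{A}{s}:S\ \text{is not } t\text{-rich}\,\}$.

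The crux is to show $\mathbb{E}Y$ is much smaller than $\mathbb{E}\binom{|A|}{s}$. Since $S\subseteq A$ precisely when $W\in N(S)$, we have $\Pr[S\subseteq A]=\frac1{rm}\sum_{W\in N(S)}d(W)$. If $S$ is not $t$-rich, then a maximal pairwise disjoint subfamily of $N(S)$ has fewer than $t$ members, so $N(S)$ is covered by a set $Q_S\subseteq V(\cG)$ with $|Q_S|<(r-1)t$; thus $\sum_{W\in N(S)}d(W)\le\sum_{v\in Q_S}\sum_{W\in N(S):\,v\in W}d(W)$, and for a fixed $u\in S$ one checks (by recovering $W$ as $e_1\setminus\{u\}$) that $\sum_{W\in N(S):\,v\in W}d(W)$ is at most the number of ordered pairs of edges $(e_1,e_2)$ of $\cG$ with $\{u,v\}\subseteq e_1$, $v\in e_2$ and $|e_1\cap e_2|\ge r-1$. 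Summing over all not-$t$-rich $S$ and double-counting over edges and over the cover vertices, one bounds $\mathbb{E}Y$ in terms of $m$, $n$, $s$ and $t$; comparing with $\mathbb{E}\binom{|A|}{s}\ge\binom{\mathbb{E}|A|}{s}\ge\binom{2^sL}{s}$ and using the lower bound on $m$, this should give $\alpha\,\mathbb{E}Y\le\tfrac12\,\mathbb{E}\binom{|A|}{s}$ once $C$ is large enough in terms of $s$. I expect this double-counting to be the main obstacle: in contrast to the graph case, ``$t$-rich'' is not the same as ``large common co-degree'', and the whole point of choosing $W$ with probability proportional to $d(W)$ is that it suppresses exactly those $s$-sets whose common neighbourhood structure is concentrated on a few vertices, which is what should make the cover bound turn into a clean estimate with a constant depending only on $s$.

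Finally, a standard deletion argument closes the proof. From $\alpha\,\mathbb{E}Y\le\frac12\mathbb{E}\binom{|A|}{s}$ one gets $\mathbb{E}\bigl[\binom{|A|}{s}-\alpha Y\bigr]\ge\frac12\mathbb{E}\binom{|A|}{s}\ge\frac12\binom{2^sL}{s}>\binom{L}{s}$, so there is an outcome in which $|A|\ge L$ (hence $|A|\ge s$, after disposing of the trivial regime of bounded $n$, which is vacuous for $C$ large) and in which the number of non-$t$-rich $s$-subsets of $A$ is smaller than $\binom{|A|}{s}/\alpha$; equivalently, the proportion of $t$-rich $s$-sets in $A$ is at least $1-\alpha^{-1}$, as required.
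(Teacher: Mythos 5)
There is a genuine gap, and it sits exactly where you flag it yourself: the inequality $\alpha\,\mathbb{E}Y\le\tfrac12\mathbb{E}\binom{|A|}{s}$ is never proved, and for the sampling distribution you chose it is in fact \emph{false}. Your $W$ is drawn with probability proportional to $d(W)$ (that is what ``uniform edge, then uniform vertex of it'' amounts to), so $(r-1)$-sets of huge co-degree are heavily over-sampled; but those are precisely the sets whose neighbourhoods tend to produce non-rich $s$-sets. Concretely, take $r=3$, $s=t=2$, $\alpha=2$, and let $\cG$ be the disjoint union of (a) a hypergraph $\cG_a$ on $n/2$ vertices with $m_a\approx 4Cn^2$ edges in which every pair of vertices lies in $O_C(1)$ edges, and (b) a ``sunflower'' $\cG_b$ on the remaining vertices with edge set $\{\{x,y_0,z\}: x\in P_0,\ z\in Z_0\}$, where $|P_0|=|Z_0|=n/4$. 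Then $e(\cG)\ge 4Cn^2$, so the theorem applies. With probability $e(\cG_b)/m=\Theta(1/C)$ your edge lands in $\cG_b$, and with conditional probability $2/3$ you get $A=P_0$ or $A=Z_0$, sets of size $n/4$ in which \emph{no} pair is $2$-rich (every common link is a sunflower through $y_0$, so its matching number is $1$). Each such outcome contributes $(1-\alpha)\binom{n/4}{2}=-\Theta(n^2)$ to $\mathbb{E}\bigl[\binom{|A|}{2}-\alpha Y\bigr]$, while every outcome with the edge in $\cG_a$ contributes at most $\binom{O_C(1)}{2}=O_C(1)$. Hence $\mathbb{E}\bigl[\binom{|A|}{2}-\alpha Y\bigr]\to-\infty$, and your deletion argument yields nothing. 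Your intuition in the third paragraph is backwards: weighting proportional to $d(W)$ \emph{favours}, rather than suppresses, concentrated neighbourhood structures.

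The paper's proof repairs exactly this, with two devices your proposal lacks. First, after deleting edges through $(r-1)$-sets of co-degree below a threshold $D$, the tuple $(v_2,\dots,v_r)$ is chosen with probability proportional to $1/d(v_2,\dots,v_r)$ --- the \emph{inverse} of your weighting. Second, $A$ is not the full neighbourhood of the chosen $(r-1)$-set: it only contains those $v_1$ for which $d(v_2,\dots,v_r)=\max_i d(v_1,\dots,v_{i-1},v_{i+1},\dots,v_r)$. These combine in the key computation: for a fixed non-rich $\{u_1,\dots,u_s\}$ and fixed $v_2,\dots,v_{r-1}$, the maximality condition caps the number of admissible $v_r$ by $d(u_1,v_2,\dots,v_{r-1})$ while the inverse weighting caps each one's probability by $D/\bigl(n^{r-1}d(u_1,v_2,\dots,v_{r-1})\bigr)$, so the co-degrees cancel and the capture probability is at most $D/n^{r-1}$ uniformly, which after the cover argument gives $\mathbb{E}[\vb]=O(r^2tDn^{s-2})$. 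Meanwhile H\"older's inequality (rather than Cauchy--Schwarz) recovers $\mathbb{E}[|\vA|^s]\gtrsim D^s$ despite the inverse weighting. Without some mechanism of this kind that makes the bad-event probability independent of local degrees, the double-counting you hope to carry out will not close.
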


Observe that the conclusion of this theorem implies that $\cG$ contains $K_{s,t}^{(r)}$ as a subgraph, so Theorem \ref{thm:r-uniform bipartite} follows immediately (by taking $\alpha=2$, for example). Moreover, as we will see shortly, Theorem \ref{thm:dependent random choice} also implies Theorems \ref{thm:degree-s on one side} and \ref{thm:cycle} fairly easily. 

The proof of Theorem \ref{thm:dependent random choice} uses a novel variant of the dependent random choice method. The rough idea is to choose random vertices $v_2,v_3,\dots,v_r\in V(\cG)$ and take $A$ to be the set of $v_1\in V(\cG)$ such that $v_1v_2\dots v_r\in E(\cG)$. However, we add two major twists to this. Firstly, we only put into $A$ those vertices $v_1$ for which $d(v_2,\dots,v_r)=\max_i d(v_1,\dots,v_{i-1},v_{i+1},\dots,v_r)$. Secondly, the vertices $v_2,\dots,v_r$ are not chosen \emph{uniformly} at random, but with probability proportional to $1/d(v_2,\dots,v_r)$.

\begin{proof}[Proof of Theorem \ref{thm:dependent random choice}]
Choose $C$ such that $C\geq 4s$ and $\frac{C^{s-1}((r-1)!)^s}{2^{s}r!s^s}-r^2\geq 1$. Since $s\geq 2$, $C$ can be chosen to be independent from $r$. Let $\cG$ be an $n$-vertex $r$-uniform hypergraph with $e(\cG)\geq C\alpha^{\frac{1}{s-1}}t^{\frac{1}{s-1}}n^{r-\frac{1}{s-1}}$. Let
$$D=\frac{C}{2}\alpha^{\frac{1}{s-1}}t^{\frac{1}{s-1}}n^{1-\frac{1}{s-1}}.$$
By successively deleting all edges containing a set of size $r-1$ which lies in less than $D$ edges, we obtain a subhypergraph $\cG'$ (on the same vertex set) with $e(\cG')\geq e(\cG) - n^{r-1}D \ge e(\cG)/2$ such that for every set $S\subset V(\cG)$ of size $r-1$, we have either $d_{\cG'}(S)=0$ or $d_{\cG'}(S)\geq D$. For the rest of the proof, we let $d(S):=d_{\cG'}(S)$ for every set $S$ of size $r-1$. 

For distinct vertices $v_2,\dots,v_{r}\in V(\cG')$, let $$A_{v_2,\dots,v_{r}}=\{v_1\in V(\cG'): v_1v_2\dots v_{r}\in E(\cG'), d(v_2,\dots,v_r)=\max_i(d(v_1,\dots,v_{i-1},v_{i+1},\dots,v_r))\}$$
and let $a(v_2,\dots,v_{r})=|A_{v_2,\dots,v_{r}}|$.

Define $$p=\sum_{\substack{v_2,\dots,v_r: \\ d(v_2,\dots,v_r)>0}} \frac{D}{n^{r-1} d(v_2,\dots,v_r)}.$$
By the definition of $\cG'$, if $d(v_2,\dots,v_r)>0$, then $d(v_2,\dots,v_r)\geq D$, so we have $p\leq 1$. Let us define a random set $\vA\subset V(\cG')$ as follows. With probability $1-p$, we let $\vA=\emptyset$. With probability $p$, we choose a random $(r-1)$-tuple $(\vv_2,\dots,\vv_r)$ of distinct vertices in $\cG'$ in a way that the probability that $\vv_i=v_i$ for every $i$ is $\frac{D}{n^{r-1} d(v_2,\dots,v_r)}$ if $d(v_2,\dots,v_r)>0$ and $0$ otherwise. Set $\vA=A_{\vv_2,\dots,\vv_r}$.

\medskip

\noindent \emph{Claim 1.} $\sum_{v_2,\dots,v_r} a(v_2,\dots,v_r)\geq (r-1)!e(\cG')$.

\medskip

\noindent \emph{Proof of Claim 1.} For any $e\in E(\cG')$, there are at least $(r-1)!$ ordered $r$-tuples $(v_1,\dots,v_r)$ such that $e=v_1v_2\dots v_r$ and $d(v_2,\dots,v_r)=\max_i(d(v_1,\dots,v_{i-1},v_{i+1},\dots,v_r))$. For any such $r$-tuple, we have $v_1\in A_{v_2,\dots,v_r}$.

\medskip

\noindent \emph{Claim 2.} $\mathbb{E}[|\vA|^s]\geq \frac{((r-1)!)^s}{r!}D^s$.

\medskip

\noindent \emph{Proof of Claim 2.} Using H\"older's inequality for three functions with parameters $p_1 = s,$ $p_2 = s, p_3 = s / (s-2),$ we get
$$\left(\sum \frac{a(v_2,\dots,v_r)^s}{d(v_2,\dots,v_r)}\right)\left(\sum d(v_2,\dots,v_r)\right)\left(\sum 1 \right)^{s-2}\geq \left(\sum a(v_2,\dots,v_r)\right)^s,$$
where each sum is over all $(r-1)$-tuples of distinct vertices $(v_2,\dots,v_r)$ with $d(v_2,\dots,v_r)>0$. 
Hence,
$$\sum \frac{a(v_2,\dots,v_r)^s}{d(v_2,\dots,v_r)}\geq \frac{\left(\sum a(v_2,\dots,v_r)\right)^s}{n^{(r-1)(s-2)}\sum d(v_2,\dots,v_r)},$$
so
\begin{align*}
    \mathbb{E}[|\vA|^s]
    &=\sum \frac{D}{n^{r-1} d(v_2,\dots,v_r)}a(v_2,\dots,v_r)^s\geq \frac{D}{n^{(r-1)(s-1)}} \frac{(\sum a(v_2,\dots,v_r))^s}{\sum d(v_2,\dots,v_r)} \\ &\geq \frac{D}{n^{(r-1)(s-1)}}\frac{((r-1)!e(\cG'))^s}{r!e(\cG')}= \frac{((r-1)!)^s D}{r! n^{(r-1)(s-1)}}e(\cG')^{s-1} \\
    &\geq \frac{((r-1)!)^s D}{r! n^{(r-1)(s-1)}}(Dn^{r-1})^{s-1}=\frac{((r-1)!)^s}{r!}D^s,
\end{align*}
where the second inequality used Claim 1.

\medskip

\noindent \emph{Claim 3.} Let $u_1$, $u_2$, \dots, $u_s$, $v_2$, \dots, $v_{r-1}$ be distinct vertices in $\cG'$. Then the probability that $u_1,u_2,\dots,u_s\in \vA$ and $\vv_i=v_i$ for all $2\leq i\leq r-1$ is at most $D/n^{r-1}$.

\medskip

\noindent \emph{Proof of Claim 3.} Assume that $v_r\in V(\cG')$ is such that $u_j\in A_{v_2,\dots,v_r}$ for each $j\in [s]$. Then in particular $u_1v_2v_3\dots v_r\in E(\cG')$ and $d(v_2,v_3,\dots,v_r)\geq d(u_1,v_2,\dots,v_{r-1})$. Clearly, there are at most $d(u_1,v_2,\dots,v_{r-1})$ choices for $v_r$ satisfying these two properties, and for each such choice, the probability that $\vv_i=v_i$ for all $2\leq i\leq r$ is $\frac{D}{n^{r-1} d(v_2,v_3,\dots,v_r)}\leq \frac{D}{n^{r-1} d(u_1,v_2,\dots,v_{r-1})}$. Hence, summing over all possibilities for $v_r$ proves the claim.

\medskip

\noindent \emph{Claim 4.} Let $u_1$, $u_2$, \dots, $u_s$ and $v_2$ be distinct vertices in $\cG'$. Then the probability that $u_1,\dots,u_s\in \vA$ and $\vv_2=v_2$ is at most $D/n^2$.

\medskip

\noindent \emph{Proof of Claim 4.} This follows from Claim 3 and the union bound over all choices for $v_3,\dots,v_{r-1}$.

\medskip

\noindent \emph{Claim 5.} Suppose that $u_1$, $u_2$, \dots, $u_s$ are distinct vertices in $\cG'$ such that $\{u_1,\dots,u_s\}$ is not $t$-rich in $\cG'$. Then the probability that $u_j\in \vA$ for every $j\in [s]$ is at most $\frac{(r-1)^2(t-1)D}{n^2}$.

\medskip

\noindent \emph{Proof of Claim 5.} Since $\{u_1,\dots,u_s\}$ is not $t$-rich, the common intersection of the link hypergraphs of $u_1, \dots, u_s$ does not contain a matching of size $t$. Hence, there is a set $T\subset V(\cG')$ of size at most $(r-1)(t-1)$ with the property that whenever $u_jv_2\dots v_r\in E(\cG')$ for every $j\in [s]$, we have $v_i\in T$ for some $2\leq i\leq r$. Therefore, if $u_j\in \vA$ for every $j\in [s]$, then $\vv_i\in T$ for some $2\leq i\leq r$. By Claim 4, the probability that $u_j\in \vA$ for every $j\in [s]$ and $\vv_2\in T$ is at most $|T|D/n^2$. By symmetry, for any fixed $2\leq i\leq r$, the probability that $u_j\in \vA$ for every $j\in [s]$ and $\vv_i\in T$ is also at most $|T|D/n^2$. The claim follows.

\medskip

Let $\vb$ be the number of sets of size $s$ in $\vA$ which are not $t$-rich. It follows from Claim~5 that $\mathbb{E}[\vb]\leq \binom{n}{s}\cdot \frac{(r-1)^2 (t-1) D}{n^2}\leq r^2 t D n^{s-2}$. By Claim 2 and since $D\geq 4s$, $\mathbb{E}\left[|\vA|^s\mathbbm{1}(|\vA|\geq s)\right]\geq \mathbb{E}\left[|\vA|^s\right]-s^s\geq \frac{((r-1)!)^s}{r!}D^s-s^s\geq \frac{((r-1)!)^s}{2r!}D^s$, so using that $\binom{x}{s} \ge (x/s)^s$ for $x \ge s,$ we have 
$$\mathbb{E}\left[\binom{|\vA|}{s}\right]\geq \mathbb{E}\left[\binom{|\vA|}{s}\mathbbm{1}(|\vA|\geq s)\right]\geq \mathbb{E}[|\vA|^s \mathbbm{1}(|\vA|\geq s)]/s^s\geq \frac{((r-1)!)^s}{r!2s^s}D^s.$$
Hence,
\begin{align*}
    \mathbb{E}\left[\binom{|\vA|}{s}-\alpha \vb\right]
    &\geq \frac{((r-1)!)^s}{r!2s^s}D^s-\alpha r^2 t D n^{s-2}=\left(\frac{((r-1)!)^s}{r!2s^s}D^{s-1}-\alpha r^2 t n^{s-2}\right)D \\
    &=\left(\frac{C^{s-1}((r-1)!)^s}{2^{s}r!s^s}-r^2\right)\alpha t n^{s-2} D\geq \alpha t n^{s-2} D \geq \alpha^{\frac{s}{s-1}} t^{\frac{s}{s-1}} n^{s-\frac{s}{s-1}}.
\end{align*}

It follows that there is an outcome for which $\binom{|\vA|}{s}-\alpha \vb\geq \alpha^{\frac{s}{s-1}} t^{\frac{s}{s-1}} n^{s-\frac{s}{s-1}}$. Then $|\vA|\geq s$, $|\vA|\geq \alpha^{\frac{1}{s-1}}t^{\frac{1}{s-1}}n^{1-\frac{1}{s-1}}$ and the proportion of $t$-rich sets of size $s$ in $\vA$ is at least $1-\alpha^{-1}$, as desired.
\end{proof}

It is now not hard to deduce Theorem \ref{thm:degree-s on one side} and Theorem \ref{thm:cycle}.

\begin{proof}[Proof of Theorem \ref{thm:degree-s on one side}]
We may assume that there is a vertex in $Y$ of degree exactly $s$ in $G$ (else, we can replace $s$ by a smaller number). Let $t=|V(G_{X,Y}^{(r)})|$ and let $\alpha=t^s$. Let $C$ be the constant provided by Theorem \ref{thm:dependent random choice} and let $C'=C\alpha^{\frac{1}{s-1}}t^{\frac{1}{s-1}}$. Note that $C'$ is a constant that depends only on $G$ and $r$.

Let $\cG$ be an $n$-vertex $r$-uniform hypergraph with at least $C'n^{r-\frac{1}{s-1}}$ edges. By Theorem~\ref{thm:dependent random choice}, there is a set $A\subset V(\cG)$ of size at least $\alpha^{\frac{1}{s-1}}t^{\frac{1}{s-1}}n^{1-\frac{1}{s-1}}$ such that the proportion of $t$-rich sets in $A$ is at least $1-\alpha^{-1}$. Note that then $|A|\geq t\geq |X|$. Moreover, the proportion of $t$-rich sets in $A$ is greater than $1-\binom{|X|}{s}^{-1}$, so $A$ has a subset $A'$ of size $|X|$ in which all $s$-sets are $t$-rich. This implies that $\cG$ contains $G_{X,Y}^{(r)}$ as a subgraph. 
Indeed, using that $t=|V(G_{X,Y}^{(r)})|$, we can construct a copy of $G_{X,Y}^{(r)}$ by embedding $X$ arbitrarily into $A'$ and then embedding the sets $Y_1,\dots,Y_m$ from the definition of $G_{X,Y}^{(r)}$ greedily one by one. 
\end{proof}

\begin{proof}[Proof of Theorem \ref{thm:cycle}]
The lower bound was justified in the paragraph before the statement of the theorem, so it is enough to prove the upper bound. Let $C$ be the constant provided by Theorem~\ref{thm:dependent random choice} with $s=2$, and let $\cG$ be an $n$-vertex $r$-uniform hypergraph with at least $100Crtn^{r-1}$ edges. By Theorem \ref{thm:dependent random choice} applied with $\alpha=100$, $s=2$ and $rt$ in place of $t$, there is a set $A\subset V(\cG)$ of size at least $100rt$ such that the proportion of $rt$-rich sets of size $2$ in $A$ is at least $99/100$.

We claim that there is a set $A'\subset A$ of size at least $4t$ such that for each $u\in A'$, the number of $v\in A'$ for which $\{u,v\}$ is $rt$-rich is at least $3|A'|/4$. Indeed, let $A_0=A$ and, recursively for every $i$:
\begin{itemize}
    \item if there is some $u\in A_i$ such that the number of vertices $v\in A_i$ for which $\{u,v\}$ is $rt$-rich is less than $3|A_i|/4$, then choose such a vertex and let $A_{i+1}=A_i\setminus \{u\}$, 
    \item else terminate the process and let $A'=A_i$.
\end{itemize}
Clearly, we obtain a set $A'$ such that for each $u\in A'$, the number of $v\in A'$ for which $\{u,v\}$ is $rt$-rich is at least $3|A'|/4$; we just need to show that $|A'|\geq 4t$. If $|A'|<4t$, then we have deleted at least $|A|/2$ vertices which implies that there were at least $\frac{|A|}{2}\cdot (|A|/8-1)>\frac{1}{100}\binom{|A|}{2}$ pairs in $A$ which are not $rt$-rich. This is a contradiction, so indeed $|A'|\geq 4t$.

\sloppy Observe that for any $v, v' \in A'$, there are at least $|A'| / 2 \ge 2t$ vertices $u\in A'$ such that the pairs $\{u, v\}$ and $\{u, v'\}$ are $rt$-rich. We can now greedily find distinct vertices $x_1,x_2,\dots,x_t$ in $A'$ such that $\{x_1,x_2\}$, $\{x_2,x_3\}$, \dots, $\{x_t,x_1\}$ are $rt$-rich pairs. Since $|V(C_{2t}^{(r)})| = rt$, we can greedily find pairwise disjoint sets $Y_1, \dots, Y_t$ in $V(\cG)\setminus \{x_1,\dots,x_t\}$ such that $\{x_i\} \cup Y_i, \{x_{i+1}\} \cup Y_i \in e(\cG)$ for all $i \in [t]$, where we let $x_{t+1} = x_1$. Hence, $\cG$ contains $C_{2t}^{(r)}$ as a subgraph, completing the proof.
\end{proof}

\section{Lower bounds} \label{sec:lower}

In this section we prove Theorem \ref{thm:right dependence on t}. The key ingredient is the following lemma.

\begin{lemma} \label{lem:general lower bound}
    Let $A$ and $B$ be two disjoint sets of size $n$. Assume that there exist pairwise edge-disjoint bipartite graphs $G_1,G_2,\dots,G_m$ with parts $A$ and $B$ such that for any distinct vertices $x_1,x_2,\dots,x_s\in A\cup B$, there are fewer than $t$ vertices $y\in A\cup B$ for which there exists $i\in [m]$ (that may depend on $y$) with $x_1y,x_2y,\dots,x_sy\in E(G_i)$. Let $e=\sum_{i=1}^m e(G_i)$.
    \nolinebreak Then $$\ex(2kn,K_{s,t}^{(2k)})\geq e^k/m.$$
\end{lemma}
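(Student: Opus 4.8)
The plan is to build, from $G_1,\dots,G_m$, a $2k$-uniform hypergraph on $2kn$ vertices that is $K_{s,t}^{(2k)}$-free and has at least $e^k/m$ edges. Take the vertex set to be $A_1\sqcup B_1\sqcup\dots\sqcup A_k\sqcup B_k$, where each $A_j$ is a disjoint copy of $A$ and each $B_j$ a disjoint copy of $B$. Write $\mathbf G=\bigcup_{i=1}^m G_i$ for the (edge-disjoint) union on parts $A,B$, so $e(\mathbf G)=e$, and let $c\colon E(\mathbf G)\to\mathbb Z/m\mathbb Z$ send an edge to the index of the $G_i$ containing it. For $\sigma\in\mathbb Z/m\mathbb Z$, let $\mathcal H_\sigma$ be the hypergraph whose edges are the $2k$-sets $\{a_1,b_1,\dots,a_k,b_k\}$ with $a_j\in A_j$, $b_j\in B_j$, such that (i) for every $j$ the pair $a_jb_j$ is an edge of $\mathbf G$ (identifying $a_j,b_j$ with the corresponding vertices of $A,B$), and (ii) $\sum_{j=1}^{k}c(a_jb_j)=\sigma$ in $\mathbb Z/m\mathbb Z$.

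The edge count is then immediate: the number of tuples $(a_1,b_1,\dots,a_k,b_k)$ all of whose pairs $a_jb_j$ lie in $E(\mathbf G)$ is $e(\mathbf G)^k=e^k$, and each such tuple lies in exactly one $\mathcal H_\sigma$; hence $\sum_\sigma e(\mathcal H_\sigma)=e^k$ and some $\sigma$ gives $e(\mathcal H_\sigma)\ge e^k/m$. (For $k=1$ this just recovers ``take the largest $G_i$''.) It remains to show that every $\mathcal H_\sigma$ is $K_{s,t}^{(2k)}$-free.

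For the freeness I would argue by contradiction: suppose some $\mathcal H_\sigma$ contains a copy of $K_{s,t}^{(2k)}$ with vertex classes $X=\{x_1,\dots,x_s\}$ and $Y_1,\dots,Y_t$ as in the definition. Every edge of $\mathcal H_\sigma$ has exactly one vertex in each of the $2k$ parts, so in the edge $\{x_\ell\}\cup Y_j$ the set $Y_j$ occupies $2k-1$ parts and $x_\ell$ the remaining one; this remaining part cannot depend on $\ell$ (it is the part containing $x_1$) nor on $j$ (again because of $x_1$), so $X$ lies entirely in one part, say $A_{j_0}$, the case $B_{j_0}$ being symmetric. For each $j$ let $b^{(j)}\in B_{j_0}$ be the vertex of $Y_j$ in $B_{j_0}$; the $b^{(j)}$ are distinct since the $Y_j$ are disjoint. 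In $\{x_\ell\}\cup Y_j$ the pair occupying the parts $(A_{j_0},B_{j_0})$ is $(x_\ell,b^{(j)})$, while every other such pair lies inside $Y_j$, so condition (ii) forces $c(x_\ell b^{(j)})=\sigma-\big(\text{sum of the colours of the other block pairs of }Y_j\big)=:\gamma_j$, a value not depending on $\ell$. Together with (i) this yields $x_1b^{(j)},\dots,x_sb^{(j)}\in G_{\gamma_j}$ for every $j\in[t]$, so $b^{(1)},\dots,b^{(t)}$ are $t$ distinct vertices $y$ for each of which some single $G_i$ contains all of $x_1y,\dots,x_sy$ --- contradicting the hypothesis on $G_1,\dots,G_m$. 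Hence each $\mathcal H_\sigma$ is $K_{s,t}^{(2k)}$-free and $\ex(2kn,K_{s,t}^{(2k)})\ge e(\mathcal H_\sigma)\ge e^k/m$.

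I expect the delicate point to be the choice of the linking condition (ii). The plain tensor power of $\mathbf G$ (condition (i) only) has $e^k$ edges but need not be $K_{s,t}^{(2k)}$-free, since a common neighbourhood of $x_1,\dots,x_s$ spread across many different $G_i$'s survives. Forcing all $k$ block colours to coincide restores freeness but leaves only $\sum_i e(G_i)^k$ edges, which by convexity can be as small as $e^k/m^{k-1}$ --- too few once $k\ge3$; matching colours within a perfect pairing of the $k$ blocks does better but still only yields about $e^k/m^{k/2}$, which falls short once $k\ge4$. Summing the colours modulo $m$ is the condition that is simultaneously weak enough to spread the $e^k$ tensor-edges over only $m$ classes and strong enough that, wherever $A_{j_0}$ happens to sit, the colour of the cross pair $(x_\ell,b^{(j)})$ is pinned to a value independent of $\ell$. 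The real work is thus the freeness verification of the third paragraph, carried out uniformly over all possible positions of the part $A_{j_0}$.
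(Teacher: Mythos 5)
Your proposal is correct and is essentially the same as the paper's proof: the paper defines, for each residue $p$ modulo $m$, the $2k$-partite hypergraph whose edges are the transversal tuples with all $k$ block pairs in $\bigcup_i G_i$ and with the sum of their colour indices congruent to $p$, picks the densest class by averaging, and verifies $K_{s,t}^{(2k)}$-freeness exactly as in your third paragraph (the colour of the cross pair is forced to be independent of $\ell$, producing $t$ distinct common neighbours within a single $G_i$). The only cosmetic difference is that the paper reduces to the case $X\subseteq X_1$ "by symmetry" rather than carrying the general block index $j_0$ through the argument.
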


\begin{proof}
    Let $X_1,X_2,\dots,X_{2k}$ be pairwise disjoint sets of size $n$. For every $1\leq p\leq m$, we define a $2k$-partite $2k$-uniform hypergraph $\cG(p)$ with parts $X_1,X_2,\dots,X_{2k}$ as follows. For $x_1\in X_1,\dots,x_{2k}\in X_{2k}$, we let $x_1x_2\dots x_{2k}$ be a hyperedge in $\cG(p)$ if and only if there exist $1\leq i_1,i_2,\dots,i_k\leq m$ such that $i_1+\dots+i_k \equiv p \mod m$ and for each $1\leq \ell\leq k$, we have $x_{2\ell-1}x_{2\ell}\in E(G_{i_\ell})$, where $X_{2\ell-1}$ is identified with $A$ and $X_{2\ell}$ is identified with $B$. Now clearly, $|\bigcup_{p=1}^m E(\cG(p))|=|\bigcup_{i=1}^m E(G_i)|^k=e^k$. Hence, there exists some $p$ for which $e(\cG(p))\geq e^k/m$.
    
    It is therefore sufficient to prove that $\cG(p)$ is $K_{s,t}^{(2k)}$-free for every $p$.
    Suppose otherwise. By symmetry, we may assume that there are distinct vertices $x_{1,1},\dots,x_{1,s}\in X_1$, $x_{\alpha,\beta}\in X_\alpha$ for all $2\leq \alpha\leq 2k$ and $1\leq \beta\leq t$ such that $x_{1,i}x_{2,\beta}x_{3,\beta}\dots x_{2k,\beta}\in E(\cG)$ for each $1\leq i\leq s$ and $1\leq \beta\leq t$. Clearly, for each $2\leq \ell\leq k$ and $1\leq \beta \leq t$, there is a unique $i_{\ell}(\beta)\in [m]$ such that $x_{2\ell-1,\beta}x_{2\ell,\beta}\in E(G_{i_{\ell}(\beta)})$. Moreover, for any $1\leq j\leq s$ and $1\leq \beta \leq t$ there is a unique $i_1(j,\beta)\in [m]$ such that $x_{1,j}x_{2,\beta}\in E(G_{i_1(j,\beta)})$.
    
    By the definition of $\cG(p)$, for any $1\leq j\leq s$ and $1\leq \beta\leq t$, $i_1(j,\beta)+i_2(\beta)+\dots+i_k(\beta)\equiv p \mod m$. Hence, $i_1(1,\beta)=i_1(2,\beta)=\dots =i_1(s,\beta)$. Then for every $1\leq \beta \leq t$, there is some $i\in [m]$ such that $x_{1,1}x_{2,\beta},x_{1,2}x_{2,\beta},\dots,x_{1,s}x_{2,\beta}\in E(G_i)$ (namely $i=i_1(1,\beta)=i_1(2,\beta)=\dots =i_1(s,\beta)$). By assumption, the vertices $x_{2,\beta}, 1 \le \beta \le t$ are all distinct which contradicts the properties of the graphs $G_i$.
\end{proof}

We now want to show that for $m\approx (n/t)^{\frac{1}{s-1}}$, one can almost completely cover the edge set of $K_{n,n}$ with pairwise edge-disjoint graphs $G_1,\dots,G_m$ satisfying the property described in Lemma \ref{lem:general lower bound}. The bound in Lemma \ref{lem:general lower bound} will then give $\ex(2kn,K_{s,t}^{(2k)})\gtrapprox t^{\frac{1}{s-1}}n^{2k-\frac{1}{s-1}}$.

The following lemma provides a suitable collection of subgraphs under some mild divisibility conditions.

\begin{lemma} \label{lem:good partition exists}
    Let $s\geq 2$ and $h$ be positive integers and let $p$ be a prime congruent to $1$ modulo~$h$. Let $m=(p-1)/h$. Then there are pairwise edge-disjoint bipartite graphs $G_1,\dots,G_{m}$ with the same parts $A$ and $B$ such that $|A|=|B|=p^{s-1}$, $|\bigcup_{i=1}^{m} E(G_i)|=p^{2s-2}-p^{s-1}$ and for any distinct vertices $x_1,\dots,x_s\in A\cup B$ there are at most $h^{s-1}(s-1)!$ vertices $y\in A\cup B$ for which there exists $i\in [m]$ with $x_1y,x_2y,\dots,x_sy\in E(G_i)$.
\end{lemma}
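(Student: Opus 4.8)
The plan is to realise the $G_i$ as edge-disjoint pieces of a bipartite norm graph. Identify both $A$ and $B$ with the field $\mathbb{F}_{p^{s-1}}$, and let $N\colon\mathbb{F}_{p^{s-1}}\to\mathbb{F}_p$ be the norm map $N(z)=z\cdot z^{p}\cdots z^{p^{s-2}}=z^{(p^{s-1}-1)/(p-1)}$, which is multiplicative and maps $\mathbb{F}_{p^{s-1}}^{*}$ onto $\mathbb{F}_p^{*}$. Since $\mathbb{F}_p^{*}$ is cyclic of order $p-1$ and $h\mid p-1$, there is a unique subgroup $H\le\mathbb{F}_p^{*}$ with $|H|=h$, and it has exactly $m=(p-1)/h$ cosets $c_1H,\dots,c_mH$. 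For $i\in[m]$ define the bipartite graph $G_i$ on parts $A,B$ by putting $ab\in E(G_i)$ (for $a\in A$, $b\in B$) if and only if $N(a+b)\in c_iH$. Since the cosets $c_iH$ partition $\mathbb{F}_p^{*}$ and $N(a+b)\ne0$ exactly when $a+b\ne0$ in $\mathbb{F}_{p^{s-1}}$, the $G_i$ are pairwise edge-disjoint and $\bigcup_iE(G_i)$ is precisely the set of pairs $ab$ with $a+b\ne0$, of which there are $p^{2s-2}-p^{s-1}$. So the edge count is immediate, and it remains to verify the richness bound.

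Fix distinct $x_1,\dots,x_s\in A\cup B$. If they do not all lie on the same side, then no vertex is adjacent to all of them and the bound holds trivially; so, using the symmetry between $A$ and $B$, assume $x_1,\dots,x_s$ all lie on one side, and observe that any $y$ with $x_1y,\dots,x_sy\in E(G_i)$ lies on the other side. By construction, such a $y$ exists for some $i$ exactly when $N(x_1+y),\dots,N(x_s+y)$ lie in a single coset of $H$ in $\mathbb{F}_p^{*}$ (in particular each $x_j+y\ne0$). The observation that makes this tractable is that, $N$ being a homomorphism, this is equivalent to the condition $N(x_j+y)/N(x_1+y)\in H$ for all $2\le j\le s$ --- the common coset then being forced to equal $N(x_1+y)H$ --- which no longer mentions $i$. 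Grouping the $y$ by the (uniquely determined) tuple of these ratios, the number of $y$ in question equals
\[
\sum_{(d_2,\dots,d_s)\in H^{s-1}}\#\bigl\{\,y\in\mathbb{F}_{p^{s-1}}:\ N(x_j+y)=d_j\,N(x_1+y)\ \text{ for all }2\le j\le s\,\bigr\},
\]
a sum of $h^{s-1}$ terms.

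It therefore suffices to bound each summand by $(s-1)!$. Fixing $d_2,\dots,d_s\in\mathbb{F}_p^{*}$ and substituting $w=(x_1+y)^{-1}$ --- a bijection between the $y\ne-x_1$ and the nonzero $w\in\mathbb{F}_{p^{s-1}}$ --- one computes $\frac{x_j+y}{x_1+y}=(x_j-x_1)w+1$, so the system turns into
\[
N(\alpha_j w+1)=d_j\qquad(2\le j\le s),
\]
where $\alpha_j=x_j-x_1$ for $2\le j\le s$ are distinct and nonzero. This is precisely the norm-equation system at the heart of the norm-graph constructions of Koll\'ar--R\'onyai--Szab\'o and Alon--R\'onyai--Szab\'o: passing to the Frobenius-conjugate coordinates $w,w^{p},\dots,w^{p^{s-2}}$ and eliminating shows that it has at most $(s-1)!$ solutions $w$. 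I would either invoke this lemma directly or reproduce that short argument. Summing over the $h^{s-1}$ tuples $(d_2,\dots,d_s)$ then yields the claimed bound $h^{s-1}(s-1)!$.

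The one genuinely delicate step, I expect, is the last: extracting the sharp constant $(s-1)!$ for the number of solutions of the above norm system. Everything else --- the choice of $H$ and the induced partition of the complete bipartite graph on $A,B$ into the $G_i$, the edge count, and in particular the remark that passing to the ratios $N(x_j+y)/N(x_1+y)$ removes the dependence on $i$ and splits the count into $h^{s-1}$ ordinary norm-graph counts --- is routine bookkeeping.
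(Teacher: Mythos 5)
Your proposal is correct and follows essentially the same route as the paper: the same norm-graph construction with edges coloured by the coset of $N(a+b)$ modulo the order-$h$ subgroup $H\le\mathbb{F}_p^{\times}$, the same reduction of the richness bound to $h^{s-1}$ instances of a norm-equation system via the ratios $N(x_j+y)/N(x_1+y)$, and the same appeal to the Koll\'ar--R\'onyai--Szab\'o solution-counting lemma (the paper uses the substitution $z=(x_s+y)^{-1}$ and the form $N(z+a_j)=b_j$, which is your system after factoring out $N(\alpha_j)$). No gaps.
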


In the proof, we make use of the following result of Koll\'ar, R\'onyai and Szab\'o which was used to obtain their celebrated lower bound for the Tur\'an number of complete bipartite graphs; see also \cite{ALON1999280} for a refinement.

\begin{lemma}[{\cite[Theorem 3.3]{KRS96}}] \label{lem:KRS}
    Let $K$ be a field and let $a_{i,j},b_i\in K$ for $1\leq i,j\leq t$ such that $a_{i,j_1}\neq a_{i,j_2}$ for any $i\in [t]$ and $j_1\neq j_2$. Then the system of equations
    \begin{gather*}
        (z_1-a_{1,1})(z_2-a_{2,1})\dots (z_t-a_{t,1})=b_1, \\
        (z_1-a_{1,2})(z_2-a_{2,2})\dots (z_t-a_{t,2})=b_2, \\
         \vdots \\
        (z_1-a_{1,t})(z_2-a_{2,t})\dots (z_t-a_{t,t})=b_t
    \end{gather*}
    has at most $t!$ solutions $(z_1,z_2,\dots,z_t)\in K^t$.
\end{lemma}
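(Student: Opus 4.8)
The plan is to prove the lemma via the multihomogeneous Bézout theorem, exploiting that each defining polynomial $\prod_{i=1}^t(z_i - a_{i,k})$ is \emph{multilinear}, i.e.\ of degree exactly $1$ in each variable separately. Since a solution in $K^t$ is in particular a solution over the algebraic closure $\overline{K}$, it suffices to bound the number of solutions in $\overline{K}^t$. Compactify $\overline{K}^t$ as the ``finite'' chart of $(\mathbb{P}^1)^t$, writing $z_i = u_i/v_i$ with $[u_i:v_i]\in\mathbb{P}^1$; the $k$-th equation then becomes the vanishing of the form
\[
F_k \;:=\; \prod_{i=1}^t(u_i - a_{i,k}v_i) \;-\; b_k\prod_{i=1}^t v_i
\]
of multidegree $(1,1,\dots,1)$. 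Write $Z = V(F_1,\dots,F_t)\subseteq(\mathbb{P}^1)^t$. If $Z$ is finite, the multihomogeneous Bézout theorem says that the number of points of $Z$, counted with multiplicity, is the coefficient of $x_1x_2\cdots x_t$ in $\prod_{k=1}^t(x_1+x_2+\dots+x_t)=(x_1+\dots+x_t)^t$, namely $\binom{t}{1,1,\dots,1}=t!$. So the whole argument reduces to showing that $Z$ is finite and lies entirely in the finite chart $\overline{K}^t$; the number of solutions in $K^t$ is then at most $|Z|\le t!$.

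The crux is that $Z$ contains no point ``at infinity'', i.e.\ no point with some $v_{i}=0$, and this is exactly where the hypothesis $a_{i,j_1}\neq a_{i,j_2}$ is used. Suppose $P=([u_1:v_1],\dots,[u_t:v_t])\in Z$ has $v_{i_0}=0$; normalize so that $[u_{i_0}:v_{i_0}]=[1:0]$. In $F_k(P)$ the summand $b_k\prod_i v_i$ vanishes, while the $i_0$-th factor of $\prod_i(u_i-a_{i,k}v_i)$ equals $1$, so $F_k(P)=\prod_{i\neq i_0}(u_i-a_{i,k}v_i)$. Since $F_k(P)=0$, some factor with $i\neq i_0$ vanishes; such an index $i$ must satisfy $v_i\neq 0$ (otherwise $u_i=v_i=0$, impossible), so $z_i:=u_i/v_i$ is a well-defined element of $\overline{K}$ and $z_i=a_{i,k}$. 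Thus for every $k\in[t]$ there is an index $\iota(k)\in[t]\setminus\{i_0\}$ with $z_{\iota(k)}=a_{\iota(k),k}$. But $\iota$ maps the $t$-element set $[t]$ into the $(t-1)$-element set $[t]\setminus\{i_0\}$, so $\iota(k)=\iota(k')=:i^*$ for some $k\neq k'$, and then $a_{i^*,k}=z_{i^*}=a_{i^*,k'}$, contradicting $a_{i^*,k}\neq a_{i^*,k'}$. Hence $Z\subseteq\overline{K}^t$.

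It remains to check that $Z$ is finite. Since $Z$ is a common zero set of multihomogeneous forms, it is closed in $(\mathbb{P}^1)^t$. If some irreducible component of $Z$ had positive dimension, then at least one of its projections to a coordinate factor $\mathbb{P}^1$ would be non-constant, hence surjective, so that component would meet the divisor $\{v_i=0\}$ of that factor --- contradicting the previous paragraph. Therefore $Z$ is a finite subset of $\overline{K}^t$, the multihomogeneous Bézout theorem applies, and we conclude $|Z|\le t!$.

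I expect the only genuinely substantial step to be the ``no zeros at infinity'' claim, which is the short pigeonhole argument above; the rest is bookkeeping plus an appeal to the standard multihomogeneous Bézout bound for multilinear systems (equivalently, the BKK bound where every Newton polytope is the unit cube, whose mixed volume is $t!$). A more elementary alternative is induction on $t$: fixing the value of $z_t$ and applying the case $t-1$ to the first $t-1$ equations yields at most $(t-1)!$ solutions for each generic value of $z_t$, after which one bounds the number of relevant values of $z_t$ by $t$ using the last equation; the delicate part there is handling the special values $z_t=a_{t,k}$, which is why I would prefer the projective argument above.
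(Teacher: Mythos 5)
The paper offers no proof of this lemma: it is quoted verbatim from Koll\'ar--R\'onyai--Szab\'o \cite[Theorem 3.3]{KRS96}, so there is no internal argument to compare against, and your proposal should be judged on its own. It is correct. The forms $F_k$ are indeed multihomogeneous of multidegree $(1,\dots,1)$ on $(\mathbb{P}^1)^t$ and restrict to the given equations on the affine chart; your pigeonhole argument correctly excludes zeros at infinity (note it is the only place the hypothesis $a_{i,j_1}\neq a_{i,j_2}$ is used, and it also covers the case of several coordinates at infinity, since any vanishing factor forces $v_i\neq 0$); finiteness of $Z$ follows as you say because a positive-dimensional component, being complete, would surject onto some $\mathbb{P}^1$ factor and hence meet $\{v_i=0\}$; and once $Z$ is zero-dimensional, each point carries local intersection multiplicity at least one, so $|Z|$ is at most the intersection number $D_1\cdots D_t$, i.e.\ the coefficient of $x_1\cdots x_t$ in $(x_1+\cdots+x_t)^t$, which is $t!$. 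This projectivization into $(\mathbb{P}^1)^t$ together with a no-zeros-at-infinity argument and a multiprojective B\'ezout bound is essentially the standard way this bound is derived in the norm-graph literature, so you have reproved the cited result rather than found a genuinely different route; if you write it up, the one point to state precisely is the version of the multihomogeneous B\'ezout theorem you invoke (for effective divisors on a smooth complete variety whose common intersection is finite, the number of common points is at most the intersection number), since some formulations are stated only for generic systems.
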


\begin{proof}[Proof of Lemma \ref{lem:good partition exists}]
    Let $A$ and $B$ be disjoint copies of the field $\mathbb{F}_{p^{s-1}}$. Let $H$ be a subgroup of $\mathbb{F}_p^{\times}$ of order $h$, where $\mathbb{F}_p^{\times}$ denotes the multiplicative group of $\mathbb{F}_p$. Let $S_1,S_2,\dots,S_m$ be the cosets of $H$ in $\mathbb{F}_p^{\times}$.
    
    Recall that the norm map $N \colon \mathbb{F}_{p^{s-1}}\rightarrow \mathbb{F}_p$ is defined as $N(x) = x \cdot x^p \cdot x^{p^2} \cdots x^{p^{s-2}}$ and note that $N(xy) = N(x)N(y)$ for any $x, y \in \mathbb{F}_{p^{s-1}}$. For $x\in A$, $y\in B$ and $i\in [m]$, let $xy$ be an edge in $G_i$ if and only if $N(x+y)\in S_i$. Since $S_1,\dots,S_m$ partition $\mathbb{F}_p^{\times}$ and $N(z)=0$ if and only if $z=0$, it follows that $G_1,G_2,\dots,G_m$ are pairwise edge-disjoint and $\bigcup_{i=1}^{m} E(G_i)=(A\times B)\setminus \{(x,-x):x\in \mathbb{F}_{p^{s-1}}\}$. Hence, $|\bigcup_{i=1}^{m} E(G_i)|=p^{2s-2}-p^{s-1}$.
    
    We are left to show that for any distinct vertices $x_1,\dots,x_s\in A\cup B$ there are at most $h^{s-1}(s-1)!$ vertices $y\in A\cup B$ for which there exists $i\in [m]$ with $x_1y,x_2y,\dots,x_sy\in E(G_i)$. We may assume without loss of generality that $x_j\in A$ for each $j\in [s]$. Suppose that for some $y\in B$ there is $i\in [m]$ with $x_1y,x_2y,\dots,x_sy\in E(G_i)$. This means that $N(x_j+y)\in S_i$ holds for each $j\in [s]$. Then $N(\frac{x_j+y}{x_s+y})=N(x_j+y)/N(x_s+y)\in H$ for each $j\in [s-1]$.
    
    \medskip
    
    \noindent \emph{Claim.} Let $x_1,\dots,x_s$ be distinct elements of $\mathbb{F}_{p^{s-1}}$ and let $\lambda_1,\dots,\lambda_{s-1}\in H$. Then there are at most $(s-1)!$ elements $y\in \mathbb{F}_{p^{s-1}}$ such that $N(\frac{x_j+y}{x_s+y})=\lambda_j$ for each $j\in [s-1]$.
    
    \medskip
    
    Since there are $h^{s-1}$ ways to choose the possible values of $N(\frac{x_j+y}{x_s+y})$ for $j\in [s-1]$ from $H$, the claim implies the lemma.
    
    \medskip
    
    \noindent \emph{Proof of Claim.} Note that $N(\frac{x_j+y}{x_s+y})=\lambda_j$ is equivalent to $N(\frac{1}{x_s+y}+\frac{1}{x_j-x_s})=\lambda_j/N(x_j-x_s)$. Setting $z=\frac{1}{x_s+y}$, $a_j=\frac{1}{x_j-x_s}$ and $b_j=\lambda_j/N(x_j-x_s)$, the problem is reduced to counting the number of solutions to the system of equations
    \begin{align} \label{eqn:norm}
    \begin{split}
        N(z+a_1)&=b_1, \\
        N(z+a_2)&=b_2, \\
        &\vdots \\
        N(z+a_{s-1})&=b_{s-1}
    \end{split}
    \end{align}
    in the variable $z$. Since $N(z+a_j)=(z+a_j)(z^p+a_j^p)\dots (z^{p^{s-2}}+a_j^{p^{s-2}})$, we can apply Lemma~\ref{lem:KRS} (with $K=\mathbb{F}_{p^{s-1}}$, $t=s-1$, $a_{i,j}=-a_j^{p^{i-1}}$, $z_i=z^{p^{i-1}}$) to see that (\ref{eqn:norm}) has at most $(s-1)!$ solutions for $z$, completing the proof of the claim.
\end{proof}

\begin{remark}
    One can prove a variant of Lemma \ref{lem:good partition exists} using the random algebraic method of Bukh (see \cite{Bukh15} for a detailed example of how this method is applied). More precisely, one can take a uniformly random polynomial $f:\mathbb{F}_p^{2s-2}\rightarrow \mathbb{F}_p$ of a given (large) degree and set $E(G_i)=\{xy: x,y\in \mathbb{F}_p^{s-1}, f(x,y)\in S_i\}$ for all $i\in [m]$, where $S_i$ are defined as in the proof of Lemma \ref{lem:good partition exists}. The proof above uses essentially the same construction for the explicit choice $f(x,y)=N(x+y)$ (with the minor difference that the parts there are identified with $\mathbb{F}_{p^{s-1}}$ rather than $\mathbb{F}_p^{s-1}$).
\end{remark}

We are now in a position to prove Theorem \ref{thm:right dependence on t}.

\begin{proof}[Proof of Theorem \ref{thm:right dependence on t}]
    Let $h=\lfloor ((t-1)/(s-1)!)^{\frac{1}{s-1}}\rfloor \ge 1$. Choose a prime $p$ such that $p\equiv 1 \mod h$ and $\frac{1}{2}(\frac{n}{2k})^{\frac{1}{s-1}}\leq p\leq (\frac{n}{2k})^{\frac{1}{s-1}}$ (since $n$ is sufficiently large, such a prime exists by the prime number theorem for arithmetic progressions).  
    Let $m=(p-1)/h$. Note that $h^{s-1}(s-1)!\leq t-1$. By the existence of the bipartite graphs provided by Lemma \ref{lem:good partition exists} and by Lemma \ref{lem:general lower bound}, we get
    $$\ex(2kp^{s-1},K_{s,t}^{(2k)})\geq (p^{2s-2}-p^{s-1})^k/m\geq 2^{-k}p^{2(s-1)k}/m\geq 2^{-k}hp^{2(s-1)k-1}\geq ct^{\frac{1}{s-1}}n^{2k-\frac{1}{s-1}}$$
    for some positive constant $c=c(k,s)$. Since $2kp^{s-1}\leq n$, this completes the proof.
\end{proof}

\section{An improved upper bound for $r=3$} \label{sec:improved upper}

In this section we prove Theorem \ref{thm:eps improvement}. The following definition will be crucial in the proof. Here and in the rest of this section, we will ignore floor and ceiling signs whenever doing so does not make a substantial difference.

\begin{definition}
    Let $s\geq 3$ be an integer and let $\cG$ be a 3-uniform 3-partite hypergraph with parts $X$, $Y$ and $Z$ of size $n$ each. We call a vertex $z\in Z$ $s$-\emph{nice} in $\cG$ if there exist partitions $X=X_1\cup \dots \cup X_{n^{\frac{1}{s-1}}}$ and $Y=Y_1\cup \dots \cup Y_{n^{\frac{1}{s-1}}}$ into sets of size $n^{1-\frac{1}{s-1}}$ such that if $xyz\in E(\cG)$ for some $x\in X_i$, then $y\in Y_i$. We define $s$-nice vertices in $X$ and $Y$ analogously.
\end{definition}

Observe that if some vertex $z$ is $s$-nice in $\cG$, then it is also $s$-nice in any subhypergraph of $\cG$.

The proof of Theorem \ref{thm:eps improvement} will consist of two main steps. First, we prove the following structural result which states that (under a mild condition on the maximum degree) if a $K_{s,t}^{(3)}$-free hypergraph has close to $n^{3-\frac{1}{s-1}}$ edges, then it contains a subgraph with a similar number of edges in which all vertices in two of the parts are nice.

\begin{lemma} \label{lem:find structure}
    Let $s\geq 3$, let $t$ be a positive integer, let $\eps>0$ and let $n$ be sufficiently large. Let $\cG$ be a $K_{s,t}^{(3)}$-free $3$-uniform $3$-partite hypergraph with parts $X$, $Y$ and $Z$ of size $n$ each. Assume that $e(\cG)\geq n^{3-\frac{1}{s-1}-\eps}$ and that every pair of vertices belongs to at most $n^{1-\frac{1}{s-1}+\eps}$ hyperedges. Then $\cG$ has a subhypergraph $\cH$ (on the same vertex set) such that $e(\cH)\geq n^{3-\frac{1}{s-1}-225s^4\eps}$ and every vertex in $X\cup Y$, $Y\cup Z$ or $Z\cup X$ is $s$-nice in $\cH$.
\end{lemma}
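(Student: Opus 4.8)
\textbf{Proof plan for Lemma \ref{lem:find structure}.}

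The plan is to run a two-phase "defect" or "density increment" argument on a carefully chosen auxiliary structure, using the $K_{s,t}^{(3)}$-freeness to force the required partition structure onto almost all vertices, and then to pass to a subhypergraph where the exceptional vertices have been removed. First I would fix one of the three parts, say $Z$, and for each $z\in Z$ look at its link graph $L_z\subseteq X\times Y$, which is a bipartite graph with roughly $e(\cG)/n = n^{2-\frac{1}{s-1}-\eps}$ edges on average. The key observation is that being $s$-nice in $\cG$ for $z$ is exactly the statement that $L_z$ is a vertex-disjoint union of $n^{\frac{1}{s-1}}$ complete-bipartite-compatible blocks of size $n^{1-\frac{1}{s-1}}$; more usefully, it suffices that $L_z$ be contained in such a block structure, i.e. that $L_z$ has a very restricted component structure. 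So the goal is to show that we can delete few edges so that, in what remains, each $L_z$ is supported on few "columns" in a common refinement. I would set up, for each pair of parts, an equivalence-type relation: for $x,x'\in X$ declare them \emph{linked} (relative to the current hypergraph) if there is $z\in Z$ and $y\in Y$ with $xyz,x'yz\in E$; this is the codegree graph on $X$ induced through $Z$-$Y$. The partition $X=X_1\cup\dots$ in the definition of nice is forced to be (a coarsening of) the components of this linking graph, so I would analyze the component sizes of these linking graphs.

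The heart of the argument is a cleaning step that makes the linking/codegree graphs have bounded component size. The plan: define the relevant codegree graph $\Gamma$ on $X$ (edges $xx'$ weighted by the number of witnessing pairs $(y,z)$), and argue via $K_{s,t}^{(3)}$-freeness that $\Gamma$ cannot have too many edges concentrated. Concretely, $K_{s,t}^{(3)}$-freeness says: no $s$-subset of $X$ has $t$ common "link-edges" in $Y\times Z$ that are pairwise disjoint; combined with the maximum-pair-degree hypothesis $n^{1-\frac{1}{s-1}+\eps}$, this bounds how richly $s$ vertices of $X$ can be mutually linked, and hence (by a standard counting / Kővári–Sós–Turán-type argument applied to $\Gamma$) bounds $e(\Gamma)$ by roughly $n^{2-\frac{1}{s-1}+O(\eps)}$, i.e. $\Gamma$ has average degree $\approx n^{1-\frac{1}{s-1}}$. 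Then I would iteratively delete: while some vertex $x$ has degree in $\Gamma$ exceeding $n^{1-\frac{1}{s-1}+\text{poly}(s)\eps}$, delete all hyperedges of $\cG$ meeting $x$ in the relevant way (equivalently, delete $x$ from the $X$-side of the current link structure). Each deletion removes at most (max pair-degree)$\times n = n^{2-\frac{1}{s-1}+\eps}$ hyperedges, and the number of deletions is bounded by $e(\Gamma)/(\text{threshold}) \approx n^{1}$-ish up to $n^{O(\eps)}$ factors, so the total loss is $n^{3-\frac{1}{s-1}+O(\eps)}$ — which is too much as stated, so the real work is to make this accounting lossless by a more clever weighting (deleting the heaviest vertex and charging the loss to $\Gamma$-edges removed, which there are many of). After this cleaning, the linking graph on $X$ has all components of size at most $n^{1-\frac{1}{s-1}+O(s^4\eps)}$; then I would greedily merge components into exactly $n^{\frac{1}{s-1}}$ buckets of size $n^{1-\frac{1}{s-1}}$ (padding with isolated vertices), which gives the partition $X=X_1\cup\dots$, and the induced partition of $Y$ via the links — making every $z\in Z$ nice with respect to $(X,Y)$. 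Iterating this cleaning once for each of the three ordered choices of "fixed part" (first make all of $Z$ nice w.r.t.\ $X\cup Y$, then all of $X$ nice w.r.t.\ $Y\cup Z$ in the remaining hypergraph, then all of $Y$) and tracking that each phase loses only a factor $n^{O(s^4\eps)}$ and that niceness is preserved under passing to subhypergraphs gives the final bound $e(\cH)\ge n^{3-\frac{1}{s-1}-225s^4\eps}$; the constant $225s^4$ is just the sum of the three phases' losses with room to spare.

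The main obstacle I anticipate is exactly the lossless bookkeeping in the cleaning step: a naive "delete the high-degree vertex" iteration loses a factor of $n$ rather than $n^{O(\eps)}$, because each deleted vertex can cost up to (max pair-degree)$\times n$ hyperedges while only being credited with clearing one vertex. The fix has to be a genuine potential-function argument — charge each deleted hyperedge (or each deleted $\Gamma$-edge) against the edge budget of $\Gamma$, using that a vertex above the degree threshold is incident to $\gg n^{1-\frac{1}{s-1}}$ edges of $\Gamma$ and that $e(\Gamma)$ is small, so the process terminates after only $n^{O(\eps)}$ rounds of "bulk" deletion rather than $n$ rounds. A secondary subtlety is that the partitions of $X$ and $Y$ must be simultaneously consistent for all $z\in Z$: the definition demands a \emph{single} pair of partitions refining every link, so after controlling component sizes I must check that the common refinement of all the $L_z$-component partitions still has all parts small — this should follow because two $X$-vertices in the same $L_z$-component for some $z$ are linked, hence in the same $\Gamma$-component, so the $\Gamma$-component partition already refines every $L_z$. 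Getting the exponent arithmetic to close with the stated constant, while keeping the max-pair-degree hypothesis available throughout (it is preserved under taking subhypergraphs), is then routine.
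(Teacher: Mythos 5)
There is a genuine gap, and in fact two. First, the step you yourself flag as "the real work" --- making the cleaning step lossless --- is not resolved: you correctly observe that deleting a high-degree vertex of the linking graph $\Gamma$ can cost up to $n^{2-\frac{1}{s-1}+\eps}$ hyperedges while the number of deletions is only bounded by $\approx n$, so the naive accounting destroys essentially all of $\cG$, and the "potential-function fix" is only gestured at. Second, and more fundamentally, even a lossless version of your cleaning cannot deliver what you need: after deleting high-degree vertices, $\Gamma$ has bounded \emph{degree}, but bounded degree does not imply bounded \emph{component size} --- a graph of average degree $n^{1-\frac{1}{s-1}}$ can perfectly well be connected, so the claim "after this cleaning, the linking graph on $X$ has all components of size at most $n^{1-\frac{1}{s-1}+O(s^4\eps)}$" does not follow from anything you do. To force small blocks you must discard \emph{edges} of the link graphs, not just vertices of $\Gamma$, and no amount of bookkeeping on $\Gamma$ alone achieves that. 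A contributing factor is a misreading of the definition: a vertex $z$ is $s$-nice if there exist partitions of $X$ and $Y$ \emph{depending on $z$}; you do not need a single pair of partitions refining every link simultaneously. Insisting on the global partition pushes you toward the component structure of the global graph $\Gamma$, which is exactly the object you cannot control.

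The paper's proof avoids all of this by working one link graph at a time and by freely throwing away edges. For each $z$ one first shows (using $K_{s,t}^{(3)}$-freeness, convexity and the codegree upper bound) that for many $z$ the link graph $G_z$ contains at least $n^{s-1-O(s)\eps}$ $s$-tuples of $X$-vertices with common neighbourhood of size at least $n^{1-\frac{1}{s-1}-O(s)\eps}$. Then a greedy extraction (Lemma \ref{lem:partition}) pulls out $k\geq n^{\frac{1}{s-1}-O(s^2)\eps}$ pairwise disjoint pairs $(X_i,Y_i)$ of size $n^{1-\frac{1}{s-1}}$ each spanning $n^{2-\frac{2}{s-1}-O(s)\eps}$ edges of $G_z$: one finds a vertex $u$ with many partners $x$ of large codegree $d_{G_z}(u,x)$, takes $U_{j+1}$ among those partners and $V_{j+1}$ inside $N_{G_z}(u)$ (the maximum-degree hypothesis keeps $N_{G_z}(u)$ small enough to fit in one block). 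One then \emph{keeps only} the hyperedges $xyz$ with $x\in X_i$, $y\in Y_i$ for some $i$, padding the blocks to a full partition; this makes $z$ nice while retaining $n^{2-\frac{1}{s-1}-O(s^2)\eps}$ edges at $z$, hence $n^{3-\frac{1}{s-1}-15s^2\eps}$ edges overall. Since niceness survives passing to subhypergraphs, two applications of this one-part statement give the lemma. If you want to salvage your plan, the essential change is to abandon the global $\Gamma$-component picture and instead prove a per-$z$ block-extraction statement in which discarding a constant power-of-$n^{\eps}$ fraction of each link graph's edges is permitted.
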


The second step is showing that (again under some mild conditions on the degrees) such a structured hypergraph must contain $K_{s,t}^{(3)}$.

\begin{lemma} \label{lem:structure enough}
   Let $s\geq 3$, let $t$ be a positive integer, let $0<\eps<\frac{1}{4s+6}$ and let $n$ be sufficiently large. Let $\cG$ be a $3$-uniform $3$-partite hypergraph with parts $X$, $Y$ and $Z$ of size $n$ each. Assume that any pair of vertices in $\cG$ is contained in either $0$ or in at least $n^{1-\frac{1}{s-1}-\eps}$ hyperedges, but every pair of vertices is in at most $n^{1-\frac{1}{s-1}+\eps}$ hyperedges. Assume that $xyz\in E(\cG)$ for some $x\in X$, $y\in Y$ and $z\in Z$ and that every vertex in $Z\cup \{x\}$ is $s$-nice in $\cG$. Then $\cG$ contains a copy of $K_{s,t}^{(3)}$.
\end{lemma}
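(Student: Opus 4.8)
The plan is to produce, inside $\cG$, an $s$-element set $W$ lying in a single part of $\cG$ together with a matching of size $t$ in the common link of $W$ — the common link being the bipartite graph on the other two parts consisting of all pairs $\{a,b\}$ with $wab\in E(\cG)$ for every $w\in W$. Such a configuration is exactly a copy of $K_{s,t}^{(3)}$: $W$ plays the role of the part of size $s$, and the $t$ matching edges, being pairwise disjoint pairs and automatically disjoint from $W$, play the role of $Y_1,\dots,Y_t$. The point of the codegree upper bound is that it makes this cheap: any common link has maximum degree at most $n^{1-\frac{1}{s-1}+\eps}$ (a pair $\{w,a\}$ lies in at most that many hyperedges), and a bipartite graph with maximum degree at most $\Delta$ and at least $t\Delta$ edges has a matching of size at least $t$ by K\"onig's theorem. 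So it suffices to find an $s$-set $W$ whose common link has at least $t\,n^{1-\frac1{s-1}+\eps}$ edges.

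To build the substructure I would start from the hyperedge $xyz$. Applying the codegree lower bound repeatedly shows that the link $B_x$ is dense wherever it is nonzero: every non-isolated vertex of $B_x$ has degree at least $n^{1-\frac1{s-1}-\eps}$. Since $x$ is $s$-nice, $B_x$ is supported on the blocks $Y_i^x\times Z_i^x$ of its partition, each of size $n^{1-\frac1{s-1}}$; let $G$ be the part of $B_x$ inside the block containing $(y,z)$. Then $G$ is a bipartite graph on two vertex sets of size $n^{1-\frac1{s-1}}$ with at least $n^{2-\frac2{s-1}-2\eps}$ edges, i.e.\ density at least $n^{-2\eps}$. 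A K\H{o}v\'ari--S\'os--Tur\'an-type averaging in $G$ (count pairs consisting of a vertex on the $Y$-side and an $s$-subset of its neighbourhood) yields an $s$-set $Q=\{q_1,\dots,q_s\}$ on the $Z$-side with common neighbourhood $P=N_G(Q)$ of size at least $c_s\,n^{1-\frac1{s-1}-2s\eps}$, so that $xpq_j\in E(\cG)$ for all $p\in P$ and $j\in[s]$. Now I invoke that the vertices of $Q$, lying in $Z$, are $s$-nice: from $xpq_j\in E$ for every $p\in P$ one gets that all of $P$ lies in one block of $q_j$'s partition of $Y$, and hence for every $p\in P$ the set $M_{p,j}=\{u\in X:upq_j\in E\}$ is contained in the single block $U_j$ of $q_j$'s partition of $X$ containing $x$; here $|U_j|=n^{1-\frac1{s-1}}$ and $|M_{p,j}|\ge n^{1-\frac1{s-1}-\eps}$.

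At this point the common link of $Q$ is known to contain the ``star'' $\{(x,p):p\in P\}$, but a star has matching number $1$, so the real work is to upgrade this to a matching of size $t$. Equivalently, writing $\Gamma$ for the bipartite graph on $X\cup P$ with $u\sim p$ iff $upq_j\in E$ for all $j$ (which has maximum degree at most $n^{1-\frac1{s-1}+\eps}$ and is supported on $X^\ast\times P$ with $X^\ast=\bigcap_j U_j$), I must show $e(\Gamma)=\sum_{p\in P}\bigl|\bigcap_j M_{p,j}\bigr|$ is at least $t\,n^{1-\frac1{s-1}+\eps}$. Since $\bigcap_j M_{p,j}$ always contains $x$, the crux is to rule out that $\bigcap_j M_{p,j}=\{x\}$ for nearly all $p$. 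My plan is to choose $Q$ more carefully: instead of only maximising $|N_G(Q)|$, run the averaging so as to simultaneously force the blocks $U_1,\dots,U_s$ to overlap in a set $X^\ast$ of size $n^{\Omega(1)}$. What makes this possible is that \emph{every} vertex of $Z$ is $s$-nice, so if two vertices $u,u'\in X$ share a common neighbour-pair $\{p,q\}$ with $q\in Z$ then $u$ and $u'$ automatically lie in the same block of $q$'s partition of $X$; this lets one locate a large subset of $X$ through $x$ that is block-aligned for many candidate vertices $q$, and pick $Q$ among those. Once $|X^\ast|\ge n^{\Omega(1)}$, a final averaging over $u\in X^\ast$ (most of which behave like $x$ towards $P$) gives $e(\Gamma)\ge t\,n^{1-\frac1{s-1}+\eps}$; K\"onig then yields a matching of size $t$ in the common link of $Q$, and $Q$ together with this matching is the desired copy of $K_{s,t}^{(3)}$. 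The constant $\tfrac{1}{4s+6}$ in the hypothesis on $\eps$ is exactly what keeps the accumulated $\eps$-losses below the gap $1-\tfrac1{s-1}$ throughout.

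The step I expect to be the main obstacle is precisely this last one — making the blocks $U_j$ align so that the common link of $Q$ is genuinely two-dimensional rather than a star. Crude inclusion--exclusion is useless here, since the relevant sets have density only $n^{-\eps}$ inside their blocks (so their complements nearly cover everything), and one must instead extract the alignment combinatorially from the $s$-niceness of the $Z$-vertices.
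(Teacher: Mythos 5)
Your endgame (find an $s$-set whose common link has more than $(2t-2)\,n^{1-\frac{1}{s-1}+\eps}$ edges, then extract a matching of size $t$ because no single vertex covers more than $n^{1-\frac{1}{s-1}+\eps}$ of those pairs) is exactly the paper's, and your opening moves (the dense block $G\subseteq Y'\times Z'$ of the link of $x$, and the convexity step producing an $s$-set $Q\subseteq Z'$ with $|N_G(Q)|\geq n^{1-\frac{1}{s-1}-O(s)\eps}$) are sound. But there is a genuine gap exactly where you flag one, and your proposed fix does not close it. Because you place the $s$-set in $Z$, the only edges of its common link that your construction exhibits form the star $\{(x,p):p\in P\}$. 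Aligning the blocks $U_1,\dots,U_s$ so that $X^{\ast}=\bigcap_j U_j$ is large does not help: the obstruction is not that the blocks fail to overlap, but that each $M_{p,j}$ has density only about $n^{-\eps}$ inside $U_j$, so $\bigcap_j M_{p,j}$ can perfectly well equal $\{x\}$ for every $p\in P$ even when $U_1=\dots=U_s$. Nothing in the hypotheses forces ``most $u\in X^{\ast}$ to behave like $x$ towards $P$''; that two-dimensionality is precisely what has to be proved, and your sketch assumes it.

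The paper avoids this by putting the $s$-set on the $Y$-side and choosing it by pigeonhole rather than by maximizing a single codegree. For each of the at least $n^{1-\frac{1}{s-1}-\eps}$ vertices $z'\in Z'$ with $xyz'\in E(\cG)$, the niceness of $z'$ plus the codegree lower bound makes the bipartite graph between $N_{\cG}(x,z')$ and the $X$-block of $z'$ containing $x$ dense, so convexity yields at least $n^{s-\frac{2}{s-1}-(2s+1)\eps}$ copies of $K_{1,s,1}^{(3)}$ through $z'$ whose $s$-set lies in $N_{\cG}(x,z')\subseteq Y'$ and whose $X$-vertex \emph{varies}. Summing over $z'$ and pigeonholing over the only $|Y'|^s=n^{s-1-\frac{1}{s-1}}$ candidate $s$-sets $S\subseteq Y'$ produces one $S$ whose common link in $X\times Z$ has at least $n^{2-\frac{2}{s-1}-(2s+2)\eps}$ edges, and this is automatically two-dimensional because the witnesses range over many $x'$ and many $z'$. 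To salvage your $Z$-side choice of $Q$ you would need an analogous count of configurations $(u,p,Q)$ with $Q\subseteq Z'$, pigeonholing over $s$-subsets of $Z'$; but producing many such configurations with varying $u$ requires structure in the links of vertices of $Y$ (or of many vertices of $X$), which the hypotheses do not supply --- only $Z\cup\{x\}$ is assumed nice. The asymmetry of the niceness hypothesis is what dictates putting the $s$-set in $Y$.
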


We will give the proof of Lemmas \ref{lem:find structure} and \ref{lem:structure enough} in Subsections \ref{subsec:structure} and \ref{subsec:enough}, respectively. Now let us see how these lemmas imply Theorem \ref{thm:eps improvement}. The last ingredient is a lemma that shows that we can assume that no pair of vertices belongs to many hyperedges.

\begin{lemma} \label{lem:max degree}
   Let $s\geq 3$, let $t$ be a positive integer, let $0<\eps<1/2$, let $n$ be sufficiently large and let $\cG$ be a $K_{s,t}^{(3)}$-free $3$-uniform hypergraph with $3n$ vertices and at least $n^{3-\frac{1}{s-1}-\eps}$ hyperedges. Then $\cG$ has a $3$-partite subgraph $\cH$ with parts of size $n$ such that $e(\cH)\geq n^{3-\frac{1}{s-1}-2\eps}$ and any pair of vertices belongs to at most $n^{1-\frac{1}{s-1}+2\eps}$ hyperedges in $\cH$.
\end{lemma}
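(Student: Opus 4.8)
The plan is to carry out two reductions in sequence.

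\textbf{Step 1: passing to a balanced $3$-partite hypergraph.} Partition the $3n$ vertices of $\cG$ uniformly at random into parts $X,Y,Z$ of size $n$ each, and keep only the edges with exactly one vertex in each part. A fixed edge survives with probability $\tfrac{6n^{3}}{3n(3n-1)(3n-2)}=\Omega(1)$, so by linearity of expectation some partition yields a $3$-partite subhypergraph $\cG_1$ with parts of size $n$ and $e(\cG_1)\ge c_0\,e(\cG)$ for an absolute constant $c_0>0$; since $n$ is large this is at least $n^{3-\frac1{s-1}-\frac32\eps}$.

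\textbf{Step 2: destroying heavy pairs.} Put $\Delta:=n^{1-\frac1{s-1}+2\eps}$ and iteratively delete all edges through any pair whose current codegree exceeds $\Delta$, stopping when none remains; call the result $\cH$. By construction $\cH$ is $3$-partite with parts of size $n$ and all codegrees at most $\Delta$, so the only thing to check is $e(\cH)\ge n^{3-\frac1{s-1}-2\eps}$. The key tool is the codegree inequality coming from $K_{s,t}^{(3)}$-freeness: for every $s$-set $T$ of vertices the common link of $T$ (the graph of pairs $p$ with $\{v\}\cup p\in E(\cG)$ for all $v\in T$) has no matching of size $t$, hence by Erd\H{o}s--Gallai it has $O_{t}(n)$ edges; summing over $T$ gives
\begin{equation*}
\sum_{p}\binom{\deg_{\cG}(p)}{s}\;=\;\sum_{T:\,|T|=s}e\big(\text{common link of }T\big)\;=\;O_{s,t}\!\big(n^{\,s+1}\big),
\end{equation*}
the sum ranging over all pairs $p$ of vertices. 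Consequently $\cG$, and hence $\cG_1$ and every subhypergraph encountered during the process, has at most $O_{s,t}(n^{\,s+1}\Delta^{-s})$ pairs of codegree exceeding $\Delta$, and a dyadic sum over codegree scales bounds the number of edges lying on such pairs by $O_{s,t}(n^{\,3-2(s-1)\eps})$.

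\textbf{The main obstacle.} When $\eps$ is bounded below in terms of $s$ the last bound is already $o(e(\cG_1))$ and we are done immediately; but the lemma must also hold for very small $\eps$, where this naive count of deleted edges can exceed $e(\cG_1)$, so one cannot simply throw away every edge through a heavy pair. The way around this that I would take is a maximality argument. Let $\cH^{*}$ be a subhypergraph of $\cG_1$ with all codegrees at most $\Delta$ and $e(\cH^{*})$ maximum, and suppose $e(\cH^{*})<n^{3-\frac1{s-1}-2\eps}$. Then by maximality every edge of $\cG_1\setminus\cH^{*}$ — of which there are $\gtrsim n^{3-\frac1{s-1}-\eps}$ — passes through a pair that is \emph{saturated} in $\cH^{*}$ (codegree exactly $\Delta$), and there are only $O(e(\cH^{*})/\Delta)=O(n^{2-4\eps})$ saturated pairs; an averaging argument then produces $\gtrsim n^{2-\frac1{s-1}-\eps}$ saturated pairs whose codegree in $\cG_1$ is at least $n^{1-\frac1{s-1}+3\eps}$. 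This over-dense structure — a bipartite graph of roughly $n^{2-\frac1{s-1}-\eps}$ pairs, each carrying at least $n^{1-\frac1{s-1}+3\eps}$ common neighbours in the third part — should be forced to contain a copy of $K_{s,t}^{(3)}$, contradicting $K_{s,t}^{(3)}$-freeness of $\cG$ and hence giving $e(\cH^{*})\ge n^{3-\frac1{s-1}-2\eps}$. Establishing this last extraction is the crux: it amounts to re-running the counting of Step 2 \emph{inside} the dense substructure (so that the extra $n^{\eps}$-type factor in the codegrees is played off against the deficit in the number of available pairs), followed by an Erd\H{o}s--Gallai step to pull out the $t$ \emph{pairwise disjoint} $2$-sets needed to assemble the copy. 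I expect this dichotomy — cheap deletion versus extracting $K_{s,t}^{(3)}$ from the saturated structure — to be the delicate heart of the argument, with the choice of the exponents ($\Delta=n^{1-\frac1{s-1}+2\eps}$, the gap between $n^{3-\frac1{s-1}-\eps}$ and $n^{3-\frac1{s-1}-2\eps}$, and $0<\eps<\tfrac12$) tuned precisely so that it goes through; the remaining verifications are routine bookkeeping with the exponents.
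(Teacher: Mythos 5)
Your Step 1 and your codegree inequality $\sum_p \binom{\deg_{\cG}(p)}{s} = O_{s,t}(n^{s+1})$ are both fine, and you have correctly located the difficulty: the "delete all heavy pairs and count the damage" strategy only beats the total edge count when $2(s-1)\eps \gtrsim \frac{1}{s-1}$, i.e.\ when $\eps$ is bounded below in terms of $s$. But that is exactly the regime that does \emph{not} matter: in the proof of Theorem~\ref{thm:eps improvement} this lemma is invoked with $\eps < \frac{1}{900s^4(2s+3)}$, so the entire content of the lemma lives in the "hard case" that you leave open. And the resolution you sketch for that case does not close: from maximality you get roughly $n^{2-4\eps}$ saturated pairs absorbing $\gtrsim n^{3-\frac{1}{s-1}-\frac32\eps}$ edges of $\cG_1$, hence $M\approx n^{2-\frac{1}{s-1}-\frac32\eps}$ pairs of $\cG_1$-codegree $d\approx n^{1-\frac{1}{s-1}+\frac52\eps}$ (note you cannot actually push the exponent up to $3\eps$, since that is above the average). "Re-running the counting of Step 2 inside the dense substructure" means comparing $M\binom{d}{s}$ against the $O_{s,t}(n^{s+1})$ upper bound, and $M d^s = n^{s+1-\frac{2}{s-1}+O(s)\eps}$, which is \emph{below} $n^{s+1}$ unless again $\eps \gtrsim_s 1$. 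So the saturated structure is not over-dense enough to force a $K_{s,t}^{(3)}$ by this count, and the dichotomy collapses back into the same quantitative obstruction. This is a genuine gap, not routine bookkeeping.

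The paper avoids the problem by never trying to bound the number of discarded edges against the total. Instead, for each edge $e=xyz$ of the $3$-partite restriction it sets $\lambda(e)=\max(d(x,y),d(y,z),d(z,x))$ and pigeonholes dyadically to find a scale $b$ such that $n^{3-\frac{1}{s-1}-\eps-o(1)}$ edges satisfy $2^{b-1}\le \lambda(e)\le 2^b$; the subhypergraph $\cH$ is simply these edges, so all its codegrees are at most $2^b$ by construction. The work then goes into bounding $2^b$ itself: the \emph{lower} bound $d(x,y)\ge 2^{b-1}$ on the retained edges produces $\gtrsim n^{3-\frac{1}{s-1}-\eps-o(1)}(2^{b})^{s-1}$ tuples $(x,y,z_1,\dots,z_s)$ with $xyz_i\in E$ for all $i$; pigeonholing on $(z_1,\dots,z_s)$ and using the $K_{s,t}^{(3)}$-free cover of size $2t-2$ forces some pair $(y_0,z_1)$ to have codegree $\gtrsim n^{3-s-\frac{1}{s-1}-\eps-o(1)}(2^b)^{s-1}$, which must itself be $\le 2^b$. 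Solving gives $(2^b)^{s-2}\le n^{(s-2)(1-\frac{1}{s-1})+\eps+o(1)}$, i.e.\ $2^b\le n^{1-\frac{1}{s-1}+\frac{\eps}{s-2}+o(1)}$. The division of the $\eps$-loss by $s-2\ge 1$ is what makes the bound valid for every $\eps>0$; any approach that weighs a deleted-edge count of $n^{3-c(s)\eps}$ against an edge count of $n^{3-\frac{1}{s-1}-O(\eps)}$ cannot achieve this. The two-sided control at a single dyadic scale (codegree both $\ge 2^{b-1}$ and $\le 2^b$) is the idea your proposal is missing.
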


\begin{proof}
    Clearly $\cG$ has a $3$-partite subgraph $\cG'$, with parts $X,Y,Z$ of size $n$ each, such that $e(\cG')\geq \frac{2}{9}e(\cG)$. For each $e=xyz\in E(\cG')$, let $\lambda(e)=\max(d_{\cG'}(x,y),d_{\cG'}(y,z),d_{\cG'}(z,x))$. Now there is a positive integer $1\leq b\leq \log_2 n$ such that $\cG'$ has at least $e(\cG')/\log_2 n$ edges $e$ with $2^{b-1}\leq \lambda(e)\leq 2^{b}$. By symmetry, we may assume that there are at least $n^{3-\frac{1}{s-1}-\eps-o(1)}$ triples $(x,y,z)\in X\times Y\times Z$ such that $xyz\in E(\cG')$, $d_{\cG'}(x,y)\geq 2^{b-1}$ and $d_{\cG'}(x,y),d_{\cG'}(y,z),d_{\cG'}(z,x)\leq 2^{b}$. Let $\cH$ be the subgraph of $\cG'$ consisting of precisely these edges $xyz$. Clearly, $2^{b}\geq n^{1-\frac{1}{s-1}-\eps-o(1)}\geq \omega(1)$, so there are at least $n^{3-\frac{1}{s-1}-\eps-o(1)}(2^{b-1})^{s-1}$ many $(s+2)$-tuples $(x,y,z_1,\dots,z_s)\in X\times Y\times Z^s$ of distinct vertices such that $xyz_1\in E(\cH)$ and $xyz_i\in E(\cG')$ for each $i\in [s]$. Write $\mathcal{A}$ for the set of these tuples. By the pigeon hole principle, we can choose some $(z_1,\dots,z_s)\in Z^s$ which features at least $n^{-s}n^{3-\frac{1}{s-1}-\eps-o(1)}(2^{b-1})^{s-1}$ many times in $\mathcal{A}$. Since $\cG'$ does not contain $K_{s,t}^{(3)}$ as a subgraph, there is a set $T\subset X\cup Y$ of size at most $2(t-1)$ such that if $(x,y,z_1,\dots,z_s)\in \mathcal{A}$, then $x\in T$ or $y\in T$. By symmetry, we may therefore assume that for some $y_0\in T \cap Y$ there are at least $\frac{1}{2t-2}n^{-s}n^{3-\frac{1}{s-1}-\eps-o(1)}(2^{b-1})^{s-1}$ vertices $x\in X$ such that $(x,y_0,z_1,\dots,z_s)\in \mathcal{A}$. In particular, $d_{\cG'}(y_0,z_1)\geq \frac{1}{2t-2}n^{-s}n^{3-\frac{1}{s-1}-\eps-o(1)}(2^{b-1})^{s-1}$. On the other hand, $d_{\cG'}(y_0,z_1)\leq 2^b$ by the definition of $\mathcal{A}$. Hence,
    $$\frac{1}{2t-2}n^{-s}n^{3-\frac{1}{s-1}-\eps-o(1)}(2^{b-1})^{s-1}\leq 2^b,$$
    so $2^b\leq n^{1-\frac{1}{s-1}+\frac{\eps}{s-2}+o(1)}\leq n^{1-\frac{1}{s-1}+2\eps}$. This implies that every pair of vertices belongs to at most $n^{1-\frac{1}{s-1}+2\eps}$ hyperedges in $\cH$.
\end{proof}

We can now prove Theorem \ref{thm:eps improvement}.

\begin{proof}[Proof of Theorem \ref{thm:eps improvement}]
    Let $\eps<\frac{1}{900s^4(2s+3)}$, let $n$ be sufficiently large and assume, for the sake of contradiction, that $\cG$ is a $K_{s,t}^{(3)}$-free $3$-uniform hypergraph with $3n$ vertices and at least $n^{3-\frac{1}{s-1}-\eps}$ edges.
    
    By Lemma \ref{lem:max degree}, $\cG$ has a $3$-partite subgraph $\cG'$ with parts $X,Y,Z$ of size $n$ such that $e(\cG')\geq n^{3-\frac{1}{s-1}-2\eps}$ and any pair of vertices belongs to at most $n^{1-\frac{1}{s-1}+2\eps}$ hyperedges in $\cG'$.
    Lemma \ref{lem:find structure} implies that $\cG'$ has a subgraph $\cG''$ (on the same vertex set) such that $e(\cG'')\geq n^{3-\frac{1}{s-1}-450s^4 \eps}$ and every vertex in $X\cup Y$, $Y\cup Z$ or $Z\cup X$ is $s$-nice in $\cG''$. By successively removing edges which contain a pair of vertices lying in less than $D = \frac{1}{10}n^{1-\frac{1}{s-1}-450s^4 \eps}$ edges, we obtain a non-empty subgraph $\cG'''$ (on the same vertex set) in which every pair of vertices belongs to either $0$ or at least $D$ hyperedges. Hence, since $450s^4 \eps<\frac{1}{4s+6}$, Lemma~\ref{lem:structure enough} implies that $\cG'''$ contains $K_{s,t}^{(3)}$ as a subgraph, which is a contradiction.
\end{proof}

\subsection{Finding a structured subgraph in $\cG$} \label{subsec:structure}

In this subsection, we prove Lemma \ref{lem:find structure}. In what follows, for a graph $G$ and vertices $u_1,\dots,u_k\in V(G)$, we write $d_G(u_1,\dots,u_k)$ for the number of common neighbours of $u_1,\dots,u_k$ in $G$. With a slight abuse of notation, for a $3$-uniform hypergraph $\cG$, we still write $d_{\cG}(u,v)$ for the number of hyperedges in $\cG$ containing both $u$ and $v$.

\begin{lemma} \label{lem:partition}
    Let $s\geq 3$, let $\eps>0$ and let $n$ be sufficiently large. Let $G=(X,Y)$ be a bipartite graph on $n+n$ vertices such that $\Delta(G)\leq n^{1-\frac{1}{s-1}+\eps}$ and the number of $s$-tuples $(u_1,\dots,u_s)\in X^s$ with $d_G(u_1,\dots,u_s)\geq n^{1-\frac{1}{s-1}-\eps}$ is at least $n^{s-1-\eps}$. Then there are pairwise disjoint sets $U_1,U_2,\dots,U_k\subset X$ and $V_1,V_2,\dots,V_k\subset Y$ of size $n^{1-\frac{1}{s-1}}$ for some $k\geq n^{\frac{1}{s-1}-(3s+1)\eps}$ such that $G[U_i,V_i]$ has at least $n^{2-\frac{2}{s-1}-4\eps}$ edges for each $i\in [k]$.
\end{lemma}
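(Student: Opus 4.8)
The plan is to extract the pieces greedily, one at a time. Suppose $U_1,\dots,U_{i-1}$ and $V_1,\dots,V_{i-1}$ have been found; I would locate a fresh ``dense block'' $U_i\times V_i$ avoiding the previously used vertices on the $X$-side, delete $U_i$ from $X$, and iterate, stopping once $i$ reaches $k$. Two things must be controlled. First, a \emph{robustness} statement: deleting the at most $kn^{1-\frac1{s-1}}$ vertices used so far destroys only a small fraction of the $\ge n^{s-1-\eps}$ rich $s$-tuples, so the supply never runs out. Second, a \emph{one-block} statement: any induced subgraph (on parts $X',Y$ with $|X'|\le n$, still satisfying the maximum-degree bound $\Delta\le n^{1-\frac1{s-1}+\eps}$) that contains at least $\tfrac12 n^{s-1-\eps}/s!$ rich $s$-sets contains $U\subseteq X'$ and $V\subseteq Y$ with $|U|=|V|=n^{1-\frac1{s-1}}$ and $e_G(U,V)\ge n^{2-\frac2{s-1}-4\eps}$.

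For the robustness statement, the key estimate is a bound on the number $M$ of rich $s$-sets through a fixed vertex $u\in X$. A rich $s$-set containing $u$ is $\{u,u_2,\dots,u_s\}$ with $|N(u)\cap N(u_2)\cap\dots\cap N(u_s)|\ge\theta:=n^{1-\frac1{s-1}-\eps}$; summing this cardinality over all $(s-1)$-subsets $\{u_2,\dots,u_s\}\subseteq X$ equals $\sum_{w\in N(u)}\binom{d_G(w)}{s-1}\le \Delta^{s}/(s-1)!$, so dividing by $\theta$ gives $M=O_s\!\left(n^{s-2+(s+1)\eps}\right)$. Since $s\ge 3$ implies $s-2\le s-1-\tfrac2{s-1}$, deleting one block (at most $n^{1-\frac1{s-1}}$ vertices of $X$) destroys only $O\!\left(n^{\,s-1-\frac1{s-1}+(s+1)\eps}\right)$ rich $s$-sets, and over all $k\le n^{\frac1{s-1}-(3s+1)\eps}$ steps the total loss is $O\!\left(n^{\,s-1-2s\eps}\right)$, negligible next to $n^{s-1-\eps}/s!$. (Analogous bounds on the lower co-degrees of the hypergraph of rich $s$-sets, obtained the same way, will be needed in the block construction.)

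For the one-block statement I would first use a dyadic pigeonhole so that all degrees relevant to the rich $s$-sets lie in a fixed dyadic window, and then a heavy-vertex argument: averaging the relation ``$w$ is a common neighbour of the rich $s$-tuple $\sigma$'' over all rich $\sigma$ produces $w^*\in Y$ whose neighbourhood $U^*:=N(w^*)$, of size $\le\Delta$, already contains $\ge n^{\,s-1-\frac1{s-1}-O(\eps)}$ rich $s$-tuples, hence $\ge n^{1-\frac1{s-1}-O(\eps)}$ distinct vertices of degree $\ge\theta$; a random size-$n^{1-\frac1{s-1}}$ subset $U\subseteq U^*$ then has $\ge n^{2-\frac2{s-1}-O(\eps)}$ incident edges, and after confining their other endpoints — using the rich structure again, so that $N(U)$ essentially fits inside one block of size $n^{1-\frac1{s-1}+O(\eps)}$ — a final dyadic pigeonhole over degrees into $U$ supplies $V$. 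To keep the output $V_i$'s pairwise disjoint it then suffices that each block's $Y$-side is confined in this way: every vertex of $Y$ lies in only $n^{O(\eps)}$ of the constructed blocks, so a greedy selection among slightly more than $k$ of them retains $k$ with pairwise disjoint $V_i$'s (the $U_i$'s being disjoint by construction through the $X$-side deletions).

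The main difficulty is exactly this interaction on the $Y$-side: since $k$ is within an $n^{O(\eps)}$ factor of $n^{1/(s-1)}$ and each $V_i$ has size about $n^{1-\frac1{s-1}}$, the $V_i$'s nearly cover $Y$, so one cannot re-use ``rich tuples with a large surviving common neighbourhood'' in the residual graph — the richness must be converted into global confinement structure before any vertices of $Y$ are removed, and only the $X$-side may be pruned during the iteration. Keeping all the $\eps$-exponents within the stated budgets (the $(3s+1)\eps$ loss in $k$ and the $4\eps$ loss in the edge count) while absorbing the polylogarithmic losses from the dyadic pigeonholes is the delicate part of the bookkeeping.
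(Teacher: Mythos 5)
Your greedy skeleton and your robustness estimate (few rich $s$-tuples through a fixed vertex of $X$, via the max-degree bound) match the paper's proof, but there are two genuine gaps, both concerning the $Y$-side, and they are exactly where the main idea of the argument lives. First, in your ``one-block'' statement the confinement of $N(U)$ to a set of size $n^{1-\frac{1}{s-1}+O(\eps)}$ is asserted (``using the rich structure again'') but never justified, and it does not follow from your setup: your hub $w^*$ lies in $Y$, so $N(w^*)$ contains the $U$-side but gives no control whatsoever on where the edges out of $U$ land; a priori $N(U)$ can have size $|U|\cdot\Delta(G)=n^{2-\frac{2}{s-1}+\eps}$, and then no pigeonhole produces a dense $V$ of size $n^{1-\frac{1}{s-1}}$. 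The paper's fix is to transpose the hub: it picks $u\in X$ lying in many rich tuples, notes that every $x$ with $d_G(u,x)\geq n^{1-\frac{1}{s-1}-\eps}$ sends that many edges into the single container $N_G(u)$ of size at most $\Delta(G)=n^{1-\frac{1}{s-1}+\eps}$, and takes $U_{j+1}$ among such $x$ and $V_{j+1}\subseteq N_G(u)$. Confinement is then automatic from the maximum degree.

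Second, your post-hoc disjointification of the $V_i$'s is quantitatively false. Even granting that each $y\in Y$ lies in at most $n^{O(\eps)}$ blocks, each $V_i$ has size $n^{1-\frac{1}{s-1}}$ and hence can conflict with up to $n^{1-\frac{1}{s-1}+O(\eps)}$ other blocks, while the total number of blocks is forced to be at most $n^{\frac{1}{s-1}+O(\eps)}$ by the disjointness of the $U_i$'s; a greedy independent set in this conflict graph can thus have size as small as $n^{\frac{2}{s-1}-1-O(\eps)}$, which is below $1$ for every $s\geq 3$. Your stated obstruction --- that one cannot keep the $V_i$'s disjoint during the iteration because they nearly cover $Y$ --- is also not real: the paper maintains $V=\bigcup_{i\le j}V_i$ and simply excludes the set $W$ of hub candidates $u$ with $|N_G(u)\cap V|\geq\frac12 n^{1-\frac{1}{s-1}-\eps}$; a double count of $e(W,V)\le|V|\Delta(G)$ shows $|W|\leq 2n^{1-(3s-1)\eps}$, so together with the bound on rich tuples through $U\cup W$ a good hub $u\notin U\cup W$ survives, and then $N_G(u)\cap N_G(x)\setminus V$ is still large for many $x$ because its intersection with $V$ is contained in the small set $N_G(u)\cap V$. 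This single device handles confinement and disjointness simultaneously, and is the missing idea in your proposal.
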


\begin{proof}
Assume that for some $j<n^{\frac{1}{s-1}-(3s+1)\eps}$ we have already found pairwise disjoint sets $U_1,U_2,\dots,U_j\subset X$ and $V_1,V_2,\dots,V_j\subset Y$ of size $n^{1-\frac{1}{s-1}}$ such that $G[U_i,V_i]$ has at least $n^{2-\frac{2}{s-1}-4\eps}$ edges for each $i\in [j]$. We show how to find the next pair of subsets $U_{j+1}, V_{j+1}$. Write $U=\bigcup_{i\in [j]} U_i$ and $V=\bigcup_{i\in [j]} V_i$. Clearly, $|U|\leq n^{1-(3s+1)\eps}$ and $|V|\leq n^{1-(3s+1)\eps}$. Let $W=\{x\in X: |N_G(x)\cap V|\geq \frac{1}{2}n^{1-\frac{1}{s-1}-\eps}\}$. By double counting the edges between $W$ and $V$, we get $|W|\cdot \frac{1}{2}n^{1-\frac{1}{s-1}-\eps}\leq |V|\Delta(G)$, which implies that $|W|\leq 2n^{1-(3s-1)\eps}$.

For a vertex $u\in X$, let $S_u=\{x\in X: d_G(u,x)\geq n^{1-\frac{1}{s-1}-\eps}\}$. By double counting the edges between $S_u$ and $N_G(u)$, we get $|S_u|n^{1-\frac{1}{s-1}-\eps}\leq \Delta(G)^2$, so $|S_u|\leq n^{1-\frac{1}{s-1}+3\eps}$. It follows that for any $u\in X$, the number of $(u_2,\dots,u_s)\in X^{s-1}$ with $d_G(u,u_2,u_3,\dots,u_s)\geq n^{1-\frac{1}{s-1}-\eps}$ is at most $|S_u|^{s-1}\leq n^{s-2+3(s-1)\eps}$. Therefore, the number of $(u_1,\dots,u_s)\in X^s$ with $d_G(u_1,\dots,u_s)\geq n^{1-\frac{1}{s-1}-\eps}$ such that $u_i\in U\cup W$ for some $i\in [s]$ is at most $s(|U|+\nolinebreak |W|)n^{s-2+3(s-1)\eps}\leq 3sn^{s-1-2\eps}\leq \frac{1}{2}n^{s-1-\eps}$. 
On the other hand, by assumption, the number of $s$-tuples $(u_1,\dots,u_s) \in X^s$ with $d_G(u_1,\dots,u_s)\geq n^{1-\frac{1}{s-1}-\eps}$ is at least $n^{s-1-\eps}$.
Hence, there exists some $u\in X\setminus (U\cup W)$ such that there are at least $\frac{1}{2}n^{s-2-\eps}$ tuples $(u_2,\dots,u_s)\in (X\setminus (U\cup W))^{s-1}$ with $d_G(u,u_2,u_3,\dots,u_s)\geq n^{1-\frac{1}{s-1}-\eps}$. This implies that there are at least $(\frac{1}{2}n^{s-2-\eps})^{\frac{1}{s-1}}\geq \frac{1}{2}n^{1-\frac{1}{s-1}-\eps}$ vertices $x\in X\setminus (U\cup W)$ with $d_G(u,x)\geq n^{1-\frac{1}{s-1}-\eps}$. Since $u\not \in W$, we have $|N_G(u)\cap V|< \frac{1}{2}n^{1-\frac{1}{s-1}-\eps}$, so there are at least $\frac{1}{2}n^{1-\frac{1}{s-1}-\eps}$ vertices $x\in X\setminus (U\cup W)$ with $|N_G(u)\cap N_G(x)\setminus V|\geq \frac{1}{2}n^{1-\frac{1}{s-1}-\eps}$. This means that we can choose a set $U_{j+1}\subset X\setminus U$ of size $n^{1-\frac{1}{s-1}}$ which sends at least $(\frac{1}{2}n^{1-\frac{1}{s-1}-\eps})^2=\frac{1}{4}n^{2-\frac{2}{s-1}-2\eps}$ edges to $N_G(u)\setminus V$. Since $|N_G(u)\setminus V|\leq \Delta(G)$, there exists a set $V_{j+1}\subset Y\setminus V$ of size $n^{1-\frac{1}{s-1}}$ such that the number of edges in $G[U_{j+1},V_{j+1}]$ is at least 
$\frac{1}{4}n^{2-\frac{2}{s-1}-2\eps} \cdot \min\left(1,n^{1-\frac{1}{s-1}}/\Delta(G)\right) \geq   \frac{1}{4}n^{2-\frac{2}{s-1}-3\eps}\geq n^{2-\frac{2}{s-1}-4\eps}$. This completes the proof.
\end{proof}

\begin{lemma} \label{lem:one part nice}
   Let $s\geq 3$, let $t$ be a positive integer, let $\eps>0$ and let $n$ be sufficiently large. Let $\cG$ be a $K_{s,t}^{(3)}$-free $3$-uniform $3$-partite hypergraph with parts $X$, $Y$ and $Z$ of size $n$ each. Assume that $e(\cG)\geq n^{3-\frac{1}{s-1}-\eps}$ and that every pair of vertices belongs to at most $n^{1-\frac{1}{s-1}+\eps}$ hyperedges. Then $\cG$ has a subhypergraph $\cG'$ (on the same vertex set) such that $e(\cG')\geq n^{3-\frac{1}{s-1}-15s^2\eps}$ and either every vertex in $Y$ or every vertex in $Z$ is $s$-nice in $\cG'$.
\end{lemma}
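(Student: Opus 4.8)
The plan is to use a double-counting/pigeonhole argument to find a vertex $x_0 \in X$ such that the link hypergraph structure around $x_0$ is rich, then restrict attention to that link and apply Lemma \ref{lem:partition}. Concretely, for each $x \in X$, consider the bipartite link graph $G_x$ on vertex classes $Y$ and $Z$ where $yz \in E(G_x)$ iff $xyz \in E(\cG)$. Since $e(\cG) \geq n^{3-\frac{1}{s-1}-\eps}$, on average $e(G_x) \geq n^{2-\frac{1}{s-1}-\eps}$, so at least $\Omega(n^{1-\eps})$ vertices $x$ satisfy $e(G_x) \geq \frac12 n^{2-\frac{1}{s-1}-\eps}$; call these the \emph{heavy} vertices. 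The first key step is to show that for some heavy $x_0$, the graph $G_{x_0}$ has many $s$-tuples $(u_1,\dots,u_s)$ (say in $Y^s$, up to swapping the roles of $Y$ and $Z$) with large common neighbourhood $d_{G_{x_0}}(u_1,\dots,u_s) \geq n^{1-\frac{1}{s-1}-\eps'}$ for an appropriate $\eps' = O(s\eps)$ — otherwise, one can count the number of \emph{cherries} (stars $K_{1,s}$) in $G_{x_0}$ both ways and conclude $e(G_{x_0})$ is too small, using $\Delta(G_x) \leq \Delta(\cG\text{-pair degree}) \leq n^{1-\frac{1}{s-1}+\eps}$ to bound codegrees; this is essentially a Kővári–Sós–Turán-type computation. (One has to be slightly careful: what we really want is that the number of such $s$-tuples where the common neighbourhood is large \emph{and} the $u_i$ are distinct is large, which follows since the diagonal terms are lower-order.)

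Once such an $x_0$ is fixed, apply Lemma \ref{lem:partition} to $G = G_{x_0}$ (with $\eps$ replaced by the inflated $\eps' = O(s\eps)$) to obtain pairwise disjoint sets $U_1,\dots,U_k \subset Y$ and $V_1,\dots,V_k \subset Z$ of size $n^{1-\frac{1}{s-1}}$ with $k \geq n^{\frac{1}{s-1}-O(s\eps)}$ such that $G_{x_0}[U_i,V_i]$ has $\geq n^{2-\frac{2}{s-1}-O(s\eps)}$ edges. The idea now is that these partitions (after padding $U_1,\dots,U_k$ and $V_1,\dots,V_k$ arbitrarily into full partitions of $Y$ and $Z$) are a candidate witness for \emph{every} vertex of the relevant part being $s$-nice — but only once we pass to a suitable subhypergraph. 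The second key step is therefore to define $\cG'$ by keeping only those hyperedges $xyz$ of $\cG$ for which $y$ and $z$ lie in matching blocks, i.e. $y \in U_i$ and $z \in V_i$ for the same $i$. With this definition every vertex in $Z$ is automatically $s$-nice in $\cG'$ by construction (using the $U_i$ as the partition of $Y$ and the $V_i$ as the partition of $Z$, extended arbitrarily to cover the leftover vertices). It remains to check that $\cG'$ is not too sparse.

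The main obstacle — and the part requiring the most care — is the edge count for $\cG'$. A priori, restricting to matching blocks could destroy almost all edges. The fix is to exploit the fact that, for the single vertex $x_0$, the link graph already has $\geq \sum_i n^{2-\frac{2}{s-1}-O(s\eps)} = k \cdot n^{2-\frac{2}{s-1}-O(s\eps)} \geq n^{2-\frac{1}{s-1}-O(s\eps)}$ edges inside the matching blocks. But we need this for \emph{all} vertices $x \in X$ simultaneously, which is why we cannot simply use $G_{x_0}$'s contribution alone. The resolution is to iterate or bootstrap: having the block structure $(U_i, V_i)$ fixed, we go back to the full hypergraph $\cG$ and count hyperedges $xyz$ with $y \in U_i$, $z \in V_i$; the point is that any hyperedge of $\cG$ survives in $\cG'$ with "probability" governed by how the pair $(y,z)$ distributes across blocks, and since the total number of hyperedges of $\cG$ is large while the blocks cover a $\geq n^{-O(s\eps)}$ fraction of $Y \times Z$, a pigeonhole/averaging argument over the choice of which coordinate (here we need a careful accounting because the blocks only cover part of $Y$ and $Z$). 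In fact the cleanest route is: the block structure was derived from $x_0$, which already guarantees $\gtrsim n^{2-\frac{1}{s-1}-O(s\eps)}$ of $x_0$'s edges survive; so the subhypergraph $\cG'$ has at least $n^{2-\frac{1}{s-1}-O(s\eps)}$ edges — but that is only $n^{2-\dots}$, not $n^{3-\dots}$. Therefore the real argument must instead choose the block structure to be \emph{globally good}: one shows (again by averaging over $x$, combined with the conclusion of Lemma \ref{lem:partition} applied in a way that produces structure valid for a positive fraction of the edge set rather than one vertex) that a single choice of $k$ disjoint matching pairs retains $\geq n^{3-\frac{1}{s-1}-15s^2\eps}$ hyperedges of $\cG$ with both non-$X$ endpoints in matching blocks. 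I expect this global-to-local transfer — producing one block decomposition that works for the bulk of the hypergraph at once, with the exponent loss controlled at $15s^2\eps$ — to be the crux, and it is likely handled by applying the counting argument to the "aggregate" bipartite graph on $Y \times Z$ weighted by pair-degree $d_\cG(y,z)$, then invoking Lemma \ref{lem:partition} on that weighted graph and checking that a $(3s+1)\eps$-type loss there compounds with the earlier $O(s\eps)$ losses to at most $15s^2\eps$.
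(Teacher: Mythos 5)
Your outline has two genuine gaps, and they are connected: you never use the $K_{s,t}^{(3)}$-freeness of $\cG$, and you try to produce a \emph{single} global block decomposition, neither of which can work. First, the opening step fails. A link graph $G_{x_0}$ with $e(G_{x_0})\approx n^{2-\frac{1}{s-1}-\eps}$ need not contain \emph{any} $s$-tuple with common neighbourhood $\geq n^{1-\frac{1}{s-1}-\eps'}$: the expected $s$-wise codegree at this density is $n\cdot(n^{-\frac{1}{s-1}-\eps})^s=n^{-\frac{1}{s-1}-s\eps}\ll 1$, so in a pseudorandom link graph almost all $s$-tuples have empty common neighbourhood and the cherry count is spread over $\sim n^{s-\frac{1}{s-1}}$ tuples each contributing $O(1)$. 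A K\H{o}v\'ari--S\'os--Tur\'an/convexity count therefore cannot certify the hypothesis of Lemma \ref{lem:partition} (which asks for $n^{s-1-\eps}$ tuples with codegree $n^{1-\frac{1}{s-1}-\eps}$, i.e.\ that the cherries are \emph{concentrated} on few tuples). The paper obtains this concentration precisely from $K_{s,t}^{(3)}$-freeness: it counts tuples $(x_1,\dots,x_s,y,z)$ with all $x_iyz\in E(\cG)$ globally, and uses that for each fixed $(x_1,\dots,x_s)$ the extending pairs $(y,z)$ are covered by at most $2t-2$ vertices, so some single vertex $z$ carries codegree $\geq n^{1-\frac{1}{s-1}-(s+1)\eps}$ with $(x_1,\dots,x_s)$. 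Without this step your argument proves nothing that would distinguish $\cG$ from a quasirandom hypergraph of the same density.

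Second, the part you yourself flag as ``the crux'' --- finding one block decomposition that retains $n^{3-\frac{1}{s-1}-15s^2\eps}$ hyperedges --- is not only unproved but is the wrong target. The $k\approx n^{\frac{1}{s-1}}$ matched blocks of size $n^{1-\frac{1}{s-1}}$ cover only an $n^{-\frac{1}{s-1}}$ fraction of $Y\times Z$, so a single decomposition generically loses a polynomial factor $n^{\frac{1}{s-1}}$, far more than the allowed $n^{15s^2\eps}$. The point you are missing is that the definition of $s$-nice allows each vertex its \emph{own} pair of partitions; no global decomposition is needed. The paper shows that at least $n^{1-O(s^2)\eps}$ vertices $z\in Z$ have a link graph $G_z$ (on parts $X$ and $Y$ --- note that to make $z\in Z$ nice you must partition $X$ and $Y$, whereas your blocks partition $Y$ and $Z$ and would only witness niceness of vertices in $X$) satisfying the hypothesis of Lemma \ref{lem:partition}; it then applies that lemma to each such $G_z$ separately, keeps for each such $z$ the $\geq n^{2-\frac{1}{s-1}-O(s^2)\eps}$ edges through $z$ that respect $z$'s own blocks, and deletes all edges through the remaining $z$. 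Summing over the good $z$ gives the claimed $n^{3-\frac{1}{s-1}-15s^2\eps}$ edges with no global transfer required.
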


\begin{proof}
    We may assume that $\eps<1/4$, else the conclusion of the lemma holds trivially. Using $e(\cG)\geq n^{3-\frac{1}{s-1}-\eps}$, by convexity there is a set $\mathcal{T}$ of at least $\Omega_s(n^2(n^{1-\frac{1}{s-1}-\eps})^s) = \Omega_s(n^{s+1-\frac{1}{s-1}-s\varepsilon})$ tuples $(x_1,x_2,\dots,x_s,y,z)$ of distinct vertices such that $x_i\in X$, $y\in Y$ and $z\in Z$ and $x_iyz\in E(\cG)$ for every $i$. Since $\cG$ is $K_{s,t}^{(3)}$-free, for any distinct $x_1,\dots,x_s\in X$ there is a set $S\subset Y\cup Z$ of size at most $2t-2$ such that for each $y\in Y$ and $z\in Z$ for which $(x_1,x_2,\dots,x_s,y,z)\in \mathcal{T}$, we have $y\in S$ or $z\in S$. Since every pair of vertices in $\cG$ is in at most $n^{1-\frac{1}{s-1}+\eps}$ hyperedges, it follows that any fixed $(x_1,\dots,x_s)$ extends to at most $(2t-2)n^{1-\frac{1}{s-1}+\eps}$ members of $\mathcal{T}$. Hence, there are at least $\frac{\frac{1}{2}|\mathcal{T}|}{(2t-2)n^{1-\frac{1}{s-1}+\eps}}$ tuples $(x_1,\dots,x_s)$ which extend to at least $\frac{\frac{1}{2}|\mathcal{T}|}{n^s}$ members of $\mathcal{T}$. For each such $(x_1,\dots,x_s)$ there is some $z\in Y\cup Z$ such that $d_{\cG}(\{x_1,\dots,x_s\},z)\geq
    \frac{1}{2t-2} \cdot \frac{\frac{1}{2}|\mathcal{T}|}{n^s} \geq \Omega_{s,t}(n^{1 - \frac{1}{s-1}-s\varepsilon}) \geq
    n^{1-\frac{1}{s-1}-(s+1)\eps}$. Here $d_{\cG}(\{x_1,\dots,x_s\},z)$ denotes the number of vertices $y\in V(\cG)$ such that $x_iyz\in E(\cG)$ holds for all $i\in [s]$. Hence, the number of tuples $(x_1,x_2,\dots,x_s,z)\in X^s\times (Y\cup Z)$ of distinct vertices with $d_{\cG}(\{x_1,\dots,x_s\},z)\geq n^{1-\frac{1}{s-1}-(s+1)\eps}$ is at least $\frac{\frac{1}{2}|\mathcal{T}|}{(2t-2)n^{1-\frac{1}{s-1}+\eps}}\geq \Omega_{s,t}(n^{s-(s+1)\eps})\geq 4n^{s-(s+2)\eps}$. By the symmetry of $Y$ and $Z$, we may assume, without loss of generality, that there are at least $2n^{s-(s+2)\eps}$ tuples $(x_1,x_2,\dots,x_s,z)\in X^s\times Z$ with $d_{\cG}(\{x_1,\dots,x_s\},z)\geq n^{1-\frac{1}{s-1}-(s+1)\eps}$.
    
    For every $z\in Z$, define a bipartite graph $G_z$ with parts $X$ and $Y$ where $xy$ is an edge in $G_z$ if and only if $xyz\in E(\cG)$. Observe that for any $z\in Z$, we have $\Delta(G_z)\leq n^{1-\frac{1}{s-1}+\eps}$. By the previous paragraph,
    \begin{equation}\label{eq:one part nice 1}
    \sum_{z\in Z} \left| \left\{(x_1,\dots,x_s)\in X^s: d_{G_z}(x_1,\dots,x_s)\geq n^{1-\frac{1}{s-1}-(s+1)\eps} \right\}\right|\geq 2n^{s-(s+2)\eps}.
    \end{equation}
    On the other hand, we claim that for any $z\in Z$, we have 
    \begin{equation}\label{eq:one part nice 2}
    \left| \left\{(x_1,\dots,x_s)\in X^s: d_{G_z}(x_1,\dots,x_s)\geq n^{1-\frac{1}{s-1}-(s+1)\eps} \right\}\right|\leq n^{s-1+(s-1)(s+3)\eps}.
    \end{equation}
    Indeed, let $x \in X$ and let $S_x = \{x' \in X : d_{G_z}(x,x') \geq n^{1-\frac{1}{s-1}-(s+1)\varepsilon} \}$. By double counting the edges of $G_z$ between $S_x$ and $N_{G_z}(x)$, we have $|S_x| \cdot n^{1-\frac{1}{s-1}-(s+1)\varepsilon} \leq \Delta(G_z)^2 \leq n^{2-\frac{2}{s-1}+2\varepsilon}$ and so $|S_x| \leq n^{1-\frac{1}{s-1} + (s+3)\varepsilon}$. Hence, the number of $x_2,\dots,x_s \in X$ satisfying $d_{G_z}(x,x_2,\dots,x_s) \geq n^{1-\frac{1}{s-1}-(s+1)\varepsilon}$ is at most $|S_x|^{s-1} \leq n^{s-2 + (s-1)(s+3)\varepsilon}$. Now \eqref{eq:one part nice 2} follows by summing over $x$. 
    
    By \eqref{eq:one part nice 1} and \eqref{eq:one part nice 2}, there are at least $n^{1-(s+2)\eps-(s-1)(s+3)\eps}$ vertices $z\in Z$ for which
    \begin{equation}
        \left| \left\{(x_1,\dots,x_s)\in X^s: d_{G_z}(x_1,\dots,x_s)\geq n^{1-\frac{1}{s-1}-(s+1)\eps} \right\}\right|\geq n^{s-1-(s+2)\eps}. \label{eqn:good vertices}
    \end{equation}
    
    We now define a suitable subhypergraph of $\cG$.
    For any vertex $z$ satisfying (\ref{eqn:good vertices}), Lemma~\ref{lem:partition} (applied for $G_z$ with $(s+2)\eps$ in place of $\eps$) implies that for some $k \ge n^{\frac{1}{s-1}-(3s+1)(s+2)\eps}$ there are disjoint sets $X_1, \dots, X_k \subseteq X$ and $Y_1, \dots, Y_k \subseteq Y$ of size $n^{1-\frac{1}{s-1}}$ such that for all $i \in [k]$, in $G_z$ there are at least $n^{2-\frac{2}{s-1}-4(s+2)\varepsilon}$ edges between $X_i$ and $Y_i$. Among the hyperedges of $\cG$ containing $z$, keep those $xyz$ for which there is $i \in [k]$ such that $x\in X_i$ and $y\in Y_i$. Thus, for each $z$ satisfying (\ref{eqn:good vertices}) we keep at least $k \cdot n^{2-\frac{2}{s-1}-4(s+2)\varepsilon} \geq n^{2-\frac{1}{s-1} - (3s+5)(s+2)\varepsilon}$ edges containing it. For each $z\in Z$ which does not satisfy (\ref{eqn:good vertices}), delete all hyperedges of $\cG$ containing $z$. Call the resulting subhypergraph $\cG'$.
    
    It is clear that every vertex in $Z$ is $s$-nice in $\cG'$. Moreover, $$e(\cG')\geq n^{1-(s+2)\eps-(s-1)(s+3)\eps} \cdot n^{2-\frac{1}{s-1}-(3s+5)(s+2)\eps}\geq n^{3-\frac{1}{s-1}-15s^2\eps},$$ 
    as $s+2+(s-1)(s+3)+(3s+5)(s+2) \leq 15s^2$ for $s \geq 3$. This completes the proof. 
\end{proof}

\begin{proof}[Proof of Lemma \ref{lem:find structure}]
    The lemma follows from two applications of Lemma \ref{lem:one part nice}.
\end{proof}

\subsection{Finding $K_{s,t}^{(3)}$ in the structured subgraph} \label{subsec:enough}

In this subsection, we prove Lemma \ref{lem:structure enough}. In what follows, for a $3$-uniform hypergraph $\cG$ and distinct vertices $x,z\in V(\cG)$, we write $N_{\cG}(x,z)$ for the set of vertices $y\in V(\cG)$ for which $xyz$ is an edge in $\cG$. Let $K_{1,s,1}^{(3)}$ denote the $3$-uniform hypergraph with vertices $x,y_1,\dots,y_s,z$ and edges $xy_iz$, $i = 1,\dots,s$. 
  
\begin{lemma} \label{lem:many K1s1}
   Let $s\geq 3$, let $0<\eps<1/8$ and let $n$ be sufficiently large. Let $\cG$ be a $3$-uniform $3$-partite hypergraph with parts $X$, $Y$ and $Z$ of size $n$ each. Assume that every pair of vertices in $\cG$ is contained in either $0$ or at least $n^{1-\frac{1}{s-1}-\eps}$ hyperedges. Suppose that $xyz\in E(\cG)$ for some $x\in X$, $y\in Y$ and $z\in Z$ and that $z$ is $s$-nice in $\cG$. Then $\cG$ has at least $n^{s-\frac{2}{s-1}-(2s+1)\eps}$ copies of $K_{1,s,1}^{(3)}$ containing $z$ and with the part of size $s$ being a subset of $N_{\cG}(x,z)$.
\end{lemma}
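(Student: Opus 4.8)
The plan is to count the copies directly. Since every edge of $\cG$ has one vertex in each of $X,Y,Z$, and the $s$ ``petal'' vertices of a copy of $K_{1,s,1}^{(3)}$ must lie in $N_\cG(x,z)\subseteq Y$, the two apex vertices of such a copy are forced to lie one in $X$ and one in $Z$; since the copy contains $z$, the apex in $Z$ is $z$ and the apex in $X$ is some vertex $x'$. For $x'\in X$ write $M_{x'}=N_\cG(x',z)\cap N_\cG(x,z)$, i.e.\ the set of vertices $y'$ with $x'y'z\in E(\cG)$ and $xy'z\in E(\cG)$. Then the copies we wish to count correspond bijectively to pairs $(x',T)$ with $x'\in X$ and $T\subseteq M_{x'}$, $|T|=s$, so their number is exactly $\sum_{x'\in X}\binom{|M_{x'}|}{s}$, and it suffices to bound this sum from below.

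First I would use the $s$-niceness of $z$ to confine the relevant vertices to a single block. Let $X=X_1\cup\dots$ and $Y=Y_1\cup\dots$ be the index-aligned partitions into sets of size $n^{1-\frac{1}{s-1}}$ witnessing that $z$ is $s$-nice, and let $i_0$ be the index with $x\in X_{i_0}$. Then $N_\cG(x,z)\subseteq Y_{i_0}$; moreover, for every $y'\in Y_{i_0}$ one has $N_\cG(y',z)\subseteq X_{i_0}$, since if $x'y'z\in E(\cG)$ and $x'\in X_j$ then $y'\in Y_j$, forcing $Y_j=Y_{i_0}$ and hence $j=i_0$. In particular $M_{x'}=\emptyset$ whenever $x'\notin X_{i_0}$, so the sum above is really a sum over $X_{i_0}$, a set of size $n^{1-\frac1{s-1}}$.

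Next comes the averaging. Since $xyz\in E(\cG)$, the pair $\{x,z\}$ lies in a hyperedge and hence, by hypothesis, in at least $n^{1-\frac1{s-1}-\eps}$ of them, so $|N_\cG(x,z)|\ge n^{1-\frac1{s-1}-\eps}$; likewise, for each $y'\in N_\cG(x,z)$ the pair $\{y',z\}$ lies in a hyperedge, so $|N_\cG(y',z)|\ge n^{1-\frac1{s-1}-\eps}$. Counting pairs $(x',y')$ with $x'\in X_{i_0}$, $y'\in N_\cG(x,z)$ and $x'y'z\in E(\cG)$ in two ways, and using that $N_\cG(y',z)\subseteq X_{i_0}$ for $y'\in Y_{i_0}$, we get
\[
\sum_{x'\in X_{i_0}}|M_{x'}| = \sum_{y'\in N_\cG(x,z)}|N_\cG(y',z)| \ge n^{1-\frac1{s-1}-\eps}\cdot n^{1-\frac1{s-1}-\eps} = n^{2-\frac2{s-1}-2\eps}.
\]
Hence the average of $|M_{x'}|$ over $x'\in X_{i_0}$ is at least $n^{2-\frac2{s-1}-2\eps}/n^{1-\frac1{s-1}}=n^{1-\frac1{s-1}-2\eps}$, which tends to infinity since $s\ge 3$. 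By convexity of $m\mapsto\binom{m}{s}$ and the inequality $\binom{a}{s}\ge (a/s)^s$ for $a\ge s$ (applicable since this average exceeds $s$ for $n$ large),
\[
\sum_{x'\in X}\binom{|M_{x'}|}{s} \ge n^{1-\frac1{s-1}}\binom{n^{1-\frac1{s-1}-2\eps}}{s} \ge s^{-s}\,n^{s-\frac2{s-1}-2s\eps},
\]
where I used $(s+1)\bigl(1-\tfrac1{s-1}\bigr)=s-\tfrac2{s-1}$; for $n$ large (so that $n^{\eps}\ge s^{s}$) the right-hand side is at least $n^{s-\frac2{s-1}-(2s+1)\eps}$, as required.

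The only delicate point — and the reason the hypothesis that $z$ is $s$-nice is essential rather than cosmetic — is the confinement step: without it, the averaging would be over all of $X$ (size $n$) and would only give an average codegree of order $n^{1-\frac2{s-1}-2\eps}$, which is $o(1)$ when $s=3$ and yields nothing. I would also double-check that the niceness condition, which as stated constrains the $Y$-side given the $X$-side, really does pin down $N_\cG(y',z)$ inside $X_{i_0}$ for $y'\in Y_{i_0}$; it does, precisely because the two partitions are index-aligned and both are genuine partitions.
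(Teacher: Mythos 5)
Your proof is correct and follows essentially the same route as the paper's: confine the link graph of $z$ to the nice block containing $x$, double count the edges of that block meeting $N_{\cG}(x,z)$ using the codegree lower bound, and apply convexity of $m\mapsto\binom{m}{s}$ to extract the stars. The paper phrases this via the link graph $G_z$ and its minimum nonzero degree, but the counting and the exponents are identical.
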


\begin{proof}
    Let $G_z$ be the link graph of $z$.
    Since $z$ is $s$-nice in $\cG$, there are sets $X'\subset X$ and $Y'\subset Y$ of size $n^{1-\frac{1}{s-1}}$ such that $x\in X'$, $y\in Y'$ and whenever $x'y'\in E(G_z)$, then either none or both of $x'\in X'$ and $y'\in Y'$ hold. By the assumption in the lemma, every vertex in $G_z$ has degree either 0 or at least $n^{1-\frac{1}{s-1}-\eps}$.
    
    Let $xy'\in E(G_z)$. Clearly, $y'\in Y'$. Then $y'$ has at least $n^{1-\frac{1}{s-1}-\eps}$ neighbours in $G_z$, all of which must be in $X'$. Hence, there are at least $n^{1-\frac{1}{s-1}-\eps} \cdot |N_{G_z}(x)| = n^{-\eps}|X'||N_{G_z}(x)|$ edges in $G_z$ between $X'$ and $N_{G_z}(x)$. Since $|N_{G_z}(x)|$ is much larger than $n^{\eps}$, by convexity there are $\Omega_s(|X'||N_{G_z}(x)|^sn^{-s\eps})$ copies of $K_{1,s}$ in $G_z$ with the part of size $s$ inside $N_{G_z}(x)$. Since $|X'|=n^{1-\frac{1}{s-1}}$ and $|N_{G_z}(x)|\geq n^{1-\frac{1}{s-1}-\eps}$, the lemma follows.
\end{proof}

\begin{proof}[Proof of Lemma \ref{lem:structure enough}]
    Since $x$ is $s$-nice, there are sets $Y'\subset Y$ and $Z'\subset Z$ of size $n^{1-\frac{1}{s-1}}$ such that $y\in Y'$, $z\in Z'$ and whenever $xy'z'\in E(\cG)$, then we have either none or both of $y'\in Y'$ and $z'\in Z'$.
    Note that there are at least $n^{1-\frac{1}{s-1}-\eps}$ vertices $z'\in Z'$ such that $xyz'\in E(\cG)$, because each pair of vertices is in either $0$ or at least $n^{1-\frac{1}{s-1}-\eps}$ edges, by the assumption of the lemma. 
    For each $z'\in Z'$ with $xyz'\in E(\cG)$, Lemma \ref{lem:many K1s1} gives at least $n^{s-\frac{2}{s-1}-(2s+1)\eps}$ copies of $K_{1,s,1}^{(3)}$ containing $z'$ and with the part of size $s$ being a subset of $N_{\cG}(x,z')$. Since $N_{\cG}(x,z')\subset Y'$ for every such $z'$, it follows that $\cG$ contains at least $n^{1-\frac{1}{s-1}-\eps}\cdot n^{s-\frac{2}{s-1}-(2s+1)\eps}=n^{s+1-\frac{3}{s-1}-(2s+2)\eps}$ copies of $K_{1,s,1}^{(3)}$ with the part of size $s$ being a subset of $Y'$. However, $|Y'|^s=(n^{1-\frac{1}{s-1}})^s = n^{s-1-\frac{1}{s-1}}$, so it follows by the pigeon hole principle that there is a set $S\subset Y'$ of size $s$ which extends to at least $\frac{n^{s+1-\frac{3}{s-1}-(2s+2)\eps}}{n^{s-1-\frac{1}{s-1}}}=n^{2-\frac{2}{s-1}-(2s+2)\eps}$ copies of $K_{1,s,1}^{(3)}$. 
    Let $E$ be the set of pairs $(x',z') \in X \times Z$ with $x'y'z' \in E(\mathcal{G})$ for every $y' \in S$, so $|E| \geq n^{2-\frac{2}{s-1}-(2s+2)\eps}$.
    We claim that $\cG$ contains a copy of $K_{s,t}^{(3)}$ with the part of size $s$ being $S$. If not, then the pairs in $E$ are covered by at most $2t-2$ vertices. But then $|E| \leq (2t-2) \cdot n^{1-\frac{1}{s-1}+\eps}$, as every pair of vertices is in at most $n^{1-\frac{1}{s-1}+\eps}$ hyperedges. Since $2-\frac{2}{s-1}-(2s+2)\eps>1-\frac{1}{s-1}+\eps$, this contradicts $|E| \geq n^{2-\frac{2}{s-1}-(2s+2)\eps}$.
\end{proof}

\section{Concluding remarks} \label{sec:concluding remarks}
\begin{itemize}
    \item The most interesting question arising from the present paper is whether $\ex(n,K_{s,t}^{(r)}) = O_{r,s,t}(n^{r - \frac{1}{s-1} - \varepsilon})$ for $s\geq 3$ and odd $r \geq 5$. Recall that this is true for $r = 3$ (Theorem~\ref{thm:eps improvement}) but false for even $r$ if $t \gg s$ (Theorem \ref{thm:right dependence on t}).
    \item Similarly, it would be interesting to decide whether $\ex(n,K_{2,t}^{(r)})=\Theta_r(tn^{r-1})$ for odd $r\geq 5$. This is true for $r=3$ and every even $r\geq 4$. The upper bound holds for arbitrary $r\geq 3$ (Theorem \ref{thm:r-uniform bipartite}).
    \item Mubayi and Verstra\"ete conjectured that $\ex(n,K_{s,t}^{(3)}) = \Theta_{s,t}(n^{3-2/s})$ for $2 \leq s \leq t$. This remains open for $s \geq 3$.
\end{itemize}

\end{document}